\newcommand{\omegaParameters}{\Lambda}
\renewcommand{\i}{^{-1}}
\let\oldnl\nl
\newcommand{\nonl}{\renewcommand{\nl}{\let\nl\oldnl}}
\begin{document}

\title{$\mathcal{W}_\infty$-transport with discrete target as a combinatorial matching problem}

\author{Mohit Bansil}
\address{Department of Mathematics, Michigan State University}
\email{bansilmo@msu.edu}

\author{Jun Kitagawa}
\address{Department of Mathematics, Michigan State University}
\email{kitagawa@math.msu.edu}
\subjclass[2020]{05C70, 05C90, 49Q22, 65K10}
\thanks{JK's research was supported in part by National Science Foundation grants DMS-1700094 and DMS-2000128.}
 
 \begin{abstract}
 In this short note, we show that given a cost function $c$, any coupling $\pi$ of two probability measures where the second is a discrete measure can be associated to a certain bipartite graph containing a perfect matching, based on the value of the infinity transport cost $\norm{c}_{L^\infty(\pi)}$. This correspondence between couplings and bipartite graphs is explicitly constructed. We give two applications of this result to the $\mathcal{W}_\infty$ optimal transport problem when the target measure is discrete, the first is a condition to ensure existence of an optimal plan induced by a mapping, and the second is a numerical approach to approximating optimal plans.
 \end{abstract}
\maketitle

\section{Introduction}

\subsection{Problem Statement}
In this paper, we relate the $\mathcal{W}_\infty$-optimal transport problem to a combinatorial matching problem in the case where the target measure is discrete. Our main result is valid for \emph{any} source measure, in particular one which may not be absolutely continuous. As applications, we first obtain a condition ensuring there exists an optimal plan induced by a mapping, and second, a numerical method to approximate optimal plans in the $\mathcal{W}_\infty$-transport problem, which gives the first numerical algorithm for this problem. In this paper, a \emph{discrete measure} will always refer to a \emph{finite} linear combination of delta measures.

We recall the problem as follows. Let $(X, \mu)$ be an arbitrary probability measure space, and $Y = \{y_1, \dots, y_N \}$ be a finite set. We fix some probability measure $\nu$ whose support is equal to $Y$ and some function $c: X\times Y \to \R$ that is measurable with respect to the product $\sigma$-algebra. Additionally, we write $\Pi(\mu, \nu)$ for the collection of probability measures on $X\times Y$ whose left and right marginals equal $\mu$ and $\nu$ respectively.  Then the \emph{$\mathcal{W}_\infty$-optimal transport problem} is to find some $\pi \in \Pi(\mu, \nu)$ so that
\begin{equation}\label{eqn: Kantorovich}
\mathcal{W}^c_\infty(\mu, \nu):=\norm {c}_{L^\infty(\pi)} = \inf_{\ti \pi \in \Pi(\mu, \nu)} \norm {c}_{L^\infty(\ti \pi)}.
\end{equation}
Any $\pi \in \Pi(\mu, \nu)$ is referred to as a transport plan and a  minimizing $\pi$ is referred to as an \emph{$\mathcal{W}_\infty$-optimal transport plan}. We also say that $\norm {c}_{L^\infty(\pi)}$ is the \emph{$\infty$-transport cost} of the transport plan $\pi$. 

Additionally, if a  transport plan $\pi$ is of the form $(\Id \times T)_\# \mu$ for some measurable map $T: X\to Y$, then the map $T$ is called a \emph{transport map} and $\pi$ is \emph{induced by $T$}.

\subsection{Main results}

We will show that existence of a transport plan (not necessarily optimal) with some transport cost can be characterized by finding a perfect matching in a certain bipartite graph, built using the source and target measures $\mu$ and $\nu$. We start with some definitions.

\begin{defin}\label{def: graph def}
	
In this paper, a \emph{bipartite graph} $G$, will refer to a graph with (non-negatively) weighted vertices and unweighted simple edges, which is such that the vertex set can be divided into two disjoint sets $L$ and $R$ (the \emph{left} and \emph{right} vertex sets), and all edges connect exactly one vertex in $L$ with one vertex in $R$.

We refer to $V := L \coprod R$ (the disjoint union) as the \emph{vertex set}. For any $v \in V$ we use $w(v)$ to denote the weight of the vertex $v$.

Furthermore given any subset $S \subset V$, we use $\Gamma(S)$ to denote the neighbors of $S$, i.e. $\Gamma(S)$ is the collection of all $ v \in V$ so that there exists $\ti v \in V$ such that there is an edge between $v$ and $\ti v$. 
\end{defin}

\begin{defin}
	Given a bipartite graph $G$, a \emph{matching} is a map $M: L \times R \to [0, \infty)$. A matching is said to be \emph{valid} if it satisfies the following two conditions. 
\begin{enumerate}
 \item For any $l \in L$ and  $r \in R$, $M(l,r) = 0$ unless there is an edge between $l$ and $r$.
 \item  $\sum_{r' \in R} M(l,r') \leq w(l)$ and $\sum_{l' \in L} M(l',r) \leq w(r)$ for all $l\in L$ and $r\in R$. 
\end{enumerate}
	 Finally we say that a matching is \emph{perfect} if $\sum_{r' \in R} M(l,r') = w(l)$ and $\sum_{l' \in L} M(l',r) = w(r)$ for all $l\in L$ and $r\in R$. 
	
\end{defin}

\begin{defin}\label{def: G_omega}
	
A \emph{transport graph} is a bipartite graph where $L = 2^Y$ and $R = Y$, and there is an edge between $l \in L$ and $r \in R$ if and only if $r \in l$. Furthermore we require that the weights  of vertices in $L$ and $R$ respectively sum to $1$. 

For any $\omega \in \R$, the \emph{$\omega$-transport graph}, denoted $G_\omega$, is a transport graph where the vertex weights for any $y\in Y$ and $A\in 2^Y$ are defined by
\begin{align*}
w(y)&=\nu(\{y\}),\\
w(A)&=\mu(X_A),
\end{align*}
where 
\begin{align}\label{eqn: X_A}
X_A := \bigcap_{y\in A} \{x\mid c(x,y) \leq \omega\} \cap \bigcap_{y \not\in A} \{x\mid c(x,y) > \omega\}.
\end{align}
\end{defin}

\begin{rmk}
We remark that any bipartite graph can be made into a transport graph by labeling the left hand vertices with its collection of neighbors and adding zero weight left vertices for any remaining subsets. Note that in a transport graph, if $M$ is a valid matching then $\sum_{r' \in R} M(l,r') = w(l)$ for all $l\in L$ if and only if  $\sum_{l' \in L} M(l',r) = w(r)$ for all $r\in R$; in particular, either condition implies $M$ is perfect.
\end{rmk}

With this terminology in hand, we can state our main result.
\begin{thm}\label{thm: matching and plan correspondence}
	
Let $\mu\in \mathcal{P}(X)$ and $\nu$ be a discrete measure whose support is the finite set $Y$. Then there exists a transport plan $\pi\in \Pi(\mu, \nu)$ with $\infty$-transport cost at most $\omega$ if and only if the $\omega$-transport graph $G_\omega$ has a perfect matching. Furthermore, if $\mu$  has no atoms and $G_\omega$ has a perfect matching for some $\omega$, such a corresponding transport plan can be taken to arise from a transport map. 
\end{thm}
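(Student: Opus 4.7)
The plan is to prove the equivalence by explicit construction in both directions, then handle the transport map statement via a standard disintegration argument.

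For the forward direction, suppose $\pi \in \Pi(\mu,\nu)$ has $\|c\|_{L^\infty(\pi)} \le \omega$. The sets $\{X_A\}_{A \in 2^Y}$ form a measurable partition of $X$, so I would define the candidate matching by
\[
M(A,y) := \pi(X_A \times \{y\}), \qquad A \in 2^Y,\ y \in Y.
\]
The key observation is that if $y \notin A$, then on $X_A$ we have $c(x,y) > \omega$ by definition of $X_A$, and since $c \le \omega$ $\pi$-a.e., it follows that $M(A,y) = 0$ in that case, so the support condition for a valid matching holds. Summing in each marginal gives $\sum_y M(A,y) = \pi(X_A \times Y) = \mu(X_A) = w(A)$ and $\sum_A M(A,y) = \pi(X \times \{y\}) = \nu(\{y\}) = w(y)$, so $M$ is perfect.

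For the reverse direction, given a perfect matching $M$ on $G_\omega$, I would build a plan by
\[
\pi := \sum_{A \in 2^Y,\ \mu(X_A) > 0} \sum_{y \in A} \frac{M(A,y)}{\mu(X_A)}\, \bigl(\mu|_{X_A}\bigr) \otimes \delta_y,
\]
observing that if $\mu(X_A) = 0$ then $w(A) = 0$, forcing $M(A,y) = 0$ for all $y$, so those terms can be ignored. A direct check using the perfect matching identities shows the marginals are $\mu$ and $\nu$, and the support of $\pi$ lies in $\bigcup_{y \in A} X_A \times \{y\}$, on which $c \le \omega$ by definition of $X_A$; hence $\|c\|_{L^\infty(\pi)} \le \omega$.

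For the transport map statement, assume $\mu$ has no atoms and fix a perfect matching on $G_\omega$. The idea is to replace the product measure $(\mu|_{X_A}) \otimes \delta_y$ above with a concentrated piece obtained from a measurable partition $\{X_{A,y}\}_{y \in A}$ of $X_A$ with $\mu(X_{A,y}) = M(A,y)$. This reduces the problem to partitioning each atomless finite measure $\mu|_{X_A}$ into measurable pieces of prescribed masses summing to $\mu(X_A)$; on a standard probability space this follows from the intermediate value property of atomless measures applied inductively, and on a general probability space one can invoke a Sierpi\'nski-type partition lemma. The map $T$ sending $x \in X_{A,y}$ to $y$ is then measurable and satisfies $c(x, T(x)) \le \omega$ on every $X_A$ with $\mu(X_A) > 0$, so $\pi = (\Id \times T)_\#\mu$ is the desired map-induced plan.

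The main obstacle is the measure-theoretic subdivision step in the last paragraph: the first two parts are essentially bookkeeping, but partitioning $\mu|_{X_A}$ into pieces of prescribed masses is where one needs to invoke (or verify) atomless-measure structure on $X_A$ in sufficient generality to cover the paper's setting of an \emph{arbitrary} atomless probability space.
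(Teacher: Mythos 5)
Your proposal is correct and follows the paper's proof essentially step for step: the same matching $M(A,y)=\pi(X_A\times\{y\})$ in the forward direction, the same plan in the reverse direction (your $\frac{M(A,y)}{\mu(X_A)}\,\mu|_{X_A}\otimes\delta_y$ coincides with the paper's $\frac{M(A,y)}{\mu(X_A)\nu(\{y\})}\,\mu|_{X_A}\otimes\nu|_{\{y\}}$), and the same atomless-partition construction of the map $T$. The one step you flag as needing a citation, partitioning an atomless measure into pieces of prescribed masses on an arbitrary probability space, is exactly what the paper supplies via Fremlin's 215D, so there is no gap.
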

We note that the proof of the theorem gives explicit constructions of a transport plan / perfect matching arising from this correspondence (see \eqref{eqn: construct matching} and \eqref{eqn: pi def}). Finally, it is a simple matter to obtain the following useful corollary.
\begin{cor}\label{cor: corollary}
 If $\mu$ has no atoms, then for any $\mathcal{W}_\infty$-optimal transport plan $\pi$, there exists a transport map $T$ such that $(\Id\times T)_\#\mu$ has the same $\infty$-transport cost as $\pi$.
\end{cor}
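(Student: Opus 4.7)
The plan is to deduce the corollary directly from Theorem \ref{thm: matching and plan correspondence} by invoking both its forward direction and its ``furthermore'' clause. Since the theorem is already an ``if and only if,'' the corollary is really just a bookkeeping consequence of the equivalence plus the map-refinement statement, once one identifies the right value of $\omega$.

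First, I would let $\pi$ be an arbitrary $\mathcal{W}_\infty$-optimal transport plan and set $\omega^\ast := \norm{c}_{L^\infty(\pi)} = \mathcal{W}^c_\infty(\mu, \nu)$. Since $\pi \in \Pi(\mu, \nu)$ has $\infty$-transport cost at most $\omega^\ast$, the forward direction of Theorem \ref{thm: matching and plan correspondence} guarantees that the $\omega^\ast$-transport graph $G_{\omega^\ast}$ admits a perfect matching.

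Next, I would invoke the ``furthermore'' clause of the same theorem: because $\mu$ has no atoms and $G_{\omega^\ast}$ has a perfect matching, the plan associated to this matching (via the explicit construction in \eqref{eqn: pi def}) can be taken to arise from a measurable transport map $T : X \to Y$. By the equivalence in the theorem, $(\Id \times T)_\#\mu \in \Pi(\mu,\nu)$ and its $\infty$-transport cost is at most $\omega^\ast$. Combined with the defining inequality
\[
\norm{c}_{L^\infty((\Id \times T)_\#\mu)} \geq \mathcal{W}^c_\infty(\mu, \nu) = \omega^\ast,
\]
we conclude equality, so $(\Id\times T)_\#\mu$ has the same $\infty$-transport cost as $\pi$.

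There is essentially no technical obstacle once the theorem is granted; the only subtlety is to make sure that the constructed map-induced plan lies in $\Pi(\mu,\nu)$ with cost bounded by $\omega^\ast$ (which is exactly the content of the ``if'' direction of Theorem \ref{thm: matching and plan correspondence}) so that the optimality of $\omega^\ast$ can be used to upgrade the inequality to equality. No regularity or cost-function assumption beyond what is already in the theorem is required.
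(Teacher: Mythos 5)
Your proposal is correct and follows essentially the same route as the paper: apply the forward direction of Theorem \ref{thm: matching and plan correspondence} at $\omega = \mathcal{W}^c_\infty(\mu,\nu)$ to get a perfect matching in $G_\omega$, then invoke the ``furthermore'' clause to produce the map $T$. Your added remark that optimality upgrades the cost bound $\leq \omega^\ast$ to equality is a correct (and slightly more explicit) finishing touch that the paper leaves implicit.
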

In Section \ref{sec: main} below we give the proofs of these main results, Theorem \ref{thm: matching and plan correspondence} and Corollary \ref{cor: corollary}.

One interesting observation we can make is that \emph{any} bipartite graph can be suitably modified, and realized as the $\omega$-transport graph for a certain $\mathcal{W}_\infty$-optimal transport problem. Since Theorem \ref{thm: matching and plan correspondence} gives explicit constructions to go between transport plans and $\omega$-transport graphs, this shows that solving the $\mathcal{W}_\infty$-transport problem is equivalent to solving the matching problem for an arbitrary bipartite graph. This will be explored in Section \ref{sec: opt bounds}.

Finally we propose an application of Theorem \ref{thm: matching and plan correspondence} in order to numerically find approximations of $\mathcal{W}_\infty$-optimal transport plans. The idea is the following. Fix a desired error tolerance $\epsilon>0$. Then for any $\omega\in (\mathcal{W}_\infty(\mu, \nu), \mathcal{W}_\infty(\mu, \nu)+\epsilon]$, by Theorem \ref{thm: matching and plan correspondence} there exists a perfect matching in the corresponding $\omega$-transport graph. If it is possible to find this matching, then we can obtain a transport plan via \eqref{eqn: pi def} whose $\infty$-transport cost is within $\epsilon$ of the optimal value. This can be exploited as there are well established numerical methods to find a perfect matching in a bipartite graph, if the existence of such a matching is known. In practice, since the actual optimal value $\mathcal{W}_\infty(\mu, \nu)$ is unknown, it is necessary to start with a sufficiently large interval and iteratively do interval halving. In Section \ref{sec: numerics} we detail the numerical algorithm, and present some empirical examples.

\subsection{Literature review}
The $\mathcal{W}_\infty$ problem has appeared in a number of applications, we give a nonexhaustive review of a few examples. The problem was first considered by McCann (see \cite{McCann06}) to analyze a variation formulation for the problem of rotating binary stars. It was later considered by Carrillo, Gualdani, and Toscani in porous medium flow, to bound growth of the wetted region  (\cite{CarrilloGualdaniToscani04}). Finally,  $\mathcal{W}_\infty$ transport has recently appeared in quantitative convergence of empirical measures, and of Gromov-Hausdorff convergence of discrete geometric structures to the smooth one on the torus (\cite{GarciaTrillosSlepcev15, GarciaTrillos20}).

Theoretical aspects of the $\mathcal{W}_{\infty}$ problem for cost given by a power of Euclidean distance are treated in \cite{Champion2008}. There the authors introduce the notion of \emph{infinitely cyclical monotonicity}, and show this condition characterizes optimizers in the $\mathcal{W}_\infty$ problem, this is generalized in \cite{Jylha15} to other cost functions. Additionally, it is shown that if the source measure $\mu$ gives no mass to $n-1$ dimensional Lipschitz sets, and with some mild conditions on $c$, an optimal plan that is infinitely cyclically monotone is induced by a transport map (\cite[Theorem 3.5]{Jylha15}). Our result Corollary \ref{cor: corollary} states that under the weaker assumption that $\mu$ has no atoms, and for arbitrary cost function $c$, \emph{if} there exists an optimal plan then there also exists an optimal plan induced by a map; however note that we do not claim any kind of uniqueness. A dual problem is also treated in \cite{BarronBoceaJensen17}; our methods in this paper use neither duality, nor the notion of infinitely cyclical montonicity.

We also comment, there currently do not appear to be any existing numerical methods for the $\mathcal{W}_\infty$ problem, thus the method presented here is the first to be proposed.

\section{Proofs of main results}\label{sec: main}
In this section we fix an $\omega\in \R$, and take the sets $X_A$ as in \eqref{eqn: X_A}. We first show a basic partitioning property of the $X_A$.
\begin{lem}\label{lem: X_A disjoint partition}
	
The collection $\{X_A\}_{A\in 2^Y}$ is a disjoint partition of $X$, i.e. $X_A \cap X_B = \emptyset$ if $A \neq B$ and $X = \bigcup_{A \in 2^Y} X_A$. 
	
\end{lem}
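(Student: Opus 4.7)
The plan is to verify both properties directly from the definition of $X_A$, observing that membership in $X_A$ is equivalent to the set $A$ being precisely the collection of target points $y \in Y$ that are within cost $\omega$ from $x$. That is, I would first reformulate the definition of $X_A$ as: $x \in X_A$ if and only if $A = \{y \in Y \mid c(x,y) \leq \omega\}$. Both inclusions in this reformulation are immediate from \eqref{eqn: X_A}: if $y \in A$ then $c(x,y) \leq \omega$ so $y$ belongs to the right-hand set, and if $y \notin A$ then $c(x,y) > \omega$ so $y$ is excluded.

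For disjointness, I would argue by contradiction. Suppose $A \neq B$ and pick some $y \in Y$ that is in the symmetric difference; without loss of generality $y \in A \setminus B$. Any $x \in X_A$ must satisfy $c(x,y) \leq \omega$ while any $x \in X_B$ must satisfy $c(x,y) > \omega$, so $X_A \cap X_B = \emptyset$.

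For the covering, I would fix an arbitrary $x \in X$ and simply set
\begin{equation*}
A_x := \{y \in Y \mid c(x,y) \leq \omega\} \in 2^Y.
\end{equation*}
By the equivalent characterization above, $x \in X_{A_x}$, so $X \subset \bigcup_{A \in 2^Y} X_A$; the reverse inclusion is trivial since each $X_A \subset X$.

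I do not anticipate any real obstacle here; the lemma is essentially a tautological consequence of unpacking the definition of $X_A$. The only mild subtlety is ensuring that $A_x$ is a well-defined element of $2^Y$, which holds because $Y$ is finite and $c(\cdot, y)$ is measurable (so in particular defined pointwise) for each $y \in Y$.
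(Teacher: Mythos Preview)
Your proof is correct and follows essentially the same approach as the paper: both arguments pick $y$ in the symmetric difference (without loss of generality $y\in A\setminus B$) to show disjointness via the incompatible conditions $c(x,y)\leq\omega$ and $c(x,y)>\omega$, and both define $A_x=\{y\in Y\mid c(x,y)\leq\omega\}$ to establish the covering. Your preliminary reformulation of membership in $X_A$ is a clean way to organize the argument, though note that well-definedness of $A_x$ only requires $c$ to be defined pointwise, not measurability.
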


\begin{proof}	
Fix some $A, B \subset Y$ so that $A \neq B$, then without loss of generality there exists $y \in A$ so that $y \not\in B$. Then by definition
\[
X_A \subset \{x\mid c(x,y) \leq \omega\}
\] 	
and 
\[
X_B \subset \{x\mid c(x,y) > \omega\}
\]
but clearly $\{x\mid c(x,y) \leq \omega\} \cap \{x: c(x,y) > \omega\} = \emptyset$. Hence the $X_A$ are disjoint.

Next to see that the $X_A$ cover $X$, pick any $x \in X$. We define 
\begin{align*}
A:=\{y \in Y\mid c(x,y) \leq \omega\},
\end{align*}
it is then easily seen that $x \in X_A$, even if $A=\emptyset$. 
\end{proof}

\begin{proof}[Proof of Theorem \ref{thm: matching and plan correspondence}]
	
Let $G_\omega$ be the associated $\omega$ transport graph defined using the sets $X_A$, $\mu$, and $\nu$ as in Definition \ref{def: G_omega}. Recall that we write $L = 2^Y$ and $R = Y$ for the left and right vertex sets of $G_\omega$.
	
First let $\pi$ be a transport plan satisfying $\norm{c}_{L^\infty(\pi)} \leq \omega$. Then we can define the matching $M$ by setting 
\begin{align}\label{eqn: construct matching}
M(A, y) = \pi(X_A \times \{y\})
\end{align}
 for any $A \in L$ and $y \in R$. 

We will show that $M$ is a perfect matching. First, if $M(A, y) =\pi(X_A \times \{y\}) > 0$, there exists an $x \in X_A$ so that $c(x, y) \leq \norm{c}_{L^\infty(\pi)} \leq \omega$.  We conclude that $y \in A$, as if $y \not\in A$ we would have $X_A \subset \{\ti x: c(\ti x,y) > \omega\}$. In particular there is an edge between $A$ and $y$ in $G_\omega$. 

Next 
\[
\sum_{A \in L} M(A, y) 
= \sum_{A \in L} \pi(X_A \times \{y\})
= \pi(\bigcup_{A \in L} X_A \times \{y\})
= \pi(X \times \{y\})
= w(y)
\]
where we have used Lemma \ref{lem: X_A disjoint partition} for the middle two equalities. We also see
\[
\sum_{y \in R} M(A, y) 
= \sum_{y \in R} \pi(X_A \times \{y\})
= \pi(X_A \times \bigcup_{y \in R} \{y\})
= \pi(X_A \times Y)
= \mu(X_A)
= w(A).
\]
This completes the proof that $M$ is a perfect matching. 

Next suppose that we are given a perfect matching $M$ in $G_\omega$. We want to construct a transport plan. Note that by Lemma \ref{lem: X_A disjoint partition} the collection $\{X_A \times \{y\}\}_{(A, y)\in L\times R}$ form a partition of $X \times Y$. Define $\pi\in \mathcal{P}(X\times Y)$ as follows. If $\mu(X_A) = 0$ we set $\pi\big|_{X_A \times \{y\}} \equiv 0$. Otherwise we set
\begin{align}\label{eqn: pi def}
\pi\bigg|_{X_A \times \{y\}} := \frac {M(A, y)}{\mu(X_A)\nu(\{y\})} \(\mu\bigg|_{X_A} \otimes \nu\bigg|_{\{y\}}\),
\end{align}
in other words for $S \subset X \times Y$ we have
\begin{equation}\label{eqn: pi definition}
\pi(S) = \sum_{\{A\in 2^Y\mid \mu(X_A) > 0 \}}  \sum_{y\in Y}\frac {M(A, y)}{\mu(X_A)\nu(\{y\})} (\mu\big|_{X_A} \otimes \nu\big|_{\{y\}})(S).
\end{equation}
Note that for any $Q \subset X$
\[
\pi(Q \times Y) 
&= \sum_{\{A\in 2^Y\mid \mu(X_A) > 0 \}}  \sum_{y\in Y} \frac {M(A, y)}{\mu(X_A)\nu(\{y\})} (\mu\big|_{X_A} \otimes \nu\big|_{\{y\}})(Q \times Y) \\
&= \sum_{\{A\in 2^Y\mid \mu(X_A) > 0 \}}  \sum_{y\in Y}\frac {M(A, y)}{\mu(X_A)\nu(\{y\})} \mu(X_A \cap Q) \nu(\{y\})\\
&= \sum_{\{A\in 2^Y\mid \mu(X_A) > 0 \}} \frac {\mu(X_A \cap Q)}{\mu(X_A)}  \sum_{y\in Y} M(A, y) \\
&= \sum_{\{A\in 2^Y\mid \mu(X_A) > 0 \}} \frac {\mu(X_A \cap Q)}{\mu(X_A)}  \mu(X_A) \\
&= \sum_{\{A\in 2^Y\mid \mu(X_A) > 0 \}} {\mu(X_A \cap Q)} \\
&= \mu(Q)
\]
where we have used that $M$ is a perfect matching in order to obtain that $\sum_{y\in Y} M(A, y) = \sum_{y\in R} M(A, y)=w(A)=\mu(X_A)$. 
Next for any $B \subset Y$
\[
\pi(X \times B) 
&= \sum_{\{A \in 2^Y\mid \mu(X_A) > 0\}}  \sum_{y\in Y} \frac {M(A, y)}{\mu(X_A)\nu(\{y\})} (\mu\big|_{X_A} \otimes \nu\big|_{\{y\}})(X \times B) \\ 
&= \sum_{\{A \in 2^Y\mid \mu(X_A) > 0\}}  \sum_{y\in Y} \frac {M(A, y)}{\mu(X_A)\nu(\{y\})} \mu(X_A) \nu(B \cap \{y\}) \\
&=   \sum_{y\in Y} \frac {\nu(B \cap \{y\}) }{\nu(\{y\})} \sum_{\{A \in 2^Y\mid \mu(X_A) > 0\}} M(A, y)  \\
&=   \sum_{y\in Y} \frac {\nu(B \cap \{y\}) }{\nu(\{y\})} \nu(\{y\})  \\
&= \nu(B)
\]
where we have again used that $M$ is a perfect matching. This shows $\pi \in \Pi(\mu, \nu)$. All that is left to do is to verify that $\norm{c}_{L^\infty(\pi)} \leq \omega$. Note that this is the same as saying that $\pi(\{(\ti x, \ti y) \mid c(\ti x, \ti y) > \omega\}) = 0$. Since $\{X_A \times \{ y \}\}_{(A, y)\in L\times R}$ forms a partition of $X \times Y$, it suffices to show that $\pi((X_A \times \{ y \}) \cap \{(\ti x, \ti y) \mid c(\ti x, \ti y) > \omega\}) = 0$ for any $(A, y)\in L\times R$. 

We consider two cases. First if $M(A,y) = 0$ then by definition we have $\pi(X_A \times \{y\}) = 0$ and so of course $\pi((X_A \times \{ y \} )\cap \{(\ti x, \ti y) \mid c(\ti x, \ti y) > \omega\}) = 0$. Second if $M(A,y) > 0$, then since $M$ is a perfect matching we must have $y \in A$. Hence for every $x \in X_A$ we have $c(x,y) \leq \omega$, in other words $X_A \cap \{\ti x\in X\mid c(\ti x, y) > \omega\} = \emptyset$ and so 
\begin{align*}
 \pi((X_A \times \{y\})\cap \{(\ti x, \ti y)\mid c(\ti x, \ti y) > \omega\}) &\leq \pi((X_A \times \{y\}) \cap (\{\ti x\in X\mid c(\ti x, y) > \omega\} \times \{y\}) )=0
\end{align*}
as desired. 

For the last claim, assume that $\mu$ has no atoms and $M$ is a perfect matching of $G_\omega$. Since $\mu(X_A)=\sum_{y\in Y}M(A, y)$, by \cite[215D: Proposition]{Fremlin2003} there exists a partition $\{X_{A,i}\}_{i=1}^{N}$ of each $X_A$ into $N$ sets, satisfying $\mu(X_{A,i}) = M(A, y_i)$ for each $i\in \{1,\ldots, N\}$. Now define $T$ by $T(x) := y_i$ for $x \in X_{A,i}$. 

Recall that if $y_i \in A$ then $c(x, y_i) \leq \omega$ for every $x \in X_A$. Since $\mu(X_{A,i}) = M(A, y_i) = 0$ if $y_i \not\in A$, we see that for $\mu$ almost every $x$, $c(x, T(x)) \leq \omega$. 

Also 
\[
\mu(T\i(\{y_i\})) 
= \mu\(\bigcup_{A \subset Y} X_{A,i}\)
= \sum_{A \subset Y} \mu(X_{A,i})
= \sum_{A \subset Y} M(A, y_i)
= w(y_i)
= \nu(\{y_i\})
\]
and so $(\Id \times T)_\#\mu$ is a valid transport plan with cost at most $\omega$. 
\end{proof}

\begin{rmk}

We remark that the proof of Theorem \ref{thm: matching and plan correspondence} actually gives a bijective correspondence between the collection of perfect matchings in $G_\omega$ and the collection of transport plans with cost at most $\omega$ modulo ``rearrangment'' inside of each cell $X_A$. 

More rigorously: the construction gives a bijective correspondence between the collection of perfect matchings in $G_\omega$, and the collection of equivalence classes of transport plans with cost at most $\omega$, where each class consists of plans of the form given in \eqref{eqn: pi definition} but the measures $\mu\big|_{X_A} \otimes \nu\big|_{\{y\}}$ can be replaced with any measures that share the same marginals. 
	
\end{rmk}
\begin{proof}[Proof of Corollary \ref{cor: corollary}]
If a $\mathcal{W}_\infty$-optimal transport plan exists, the graph $G_\omega$ with $\omega=\mathcal{W}^c_\infty(\mu, \nu)$ contains a perfect matching by the first half of the above theorem, then we may apply the final statement in the theorem above to $G_\omega$.
\end{proof}
\section{Optimality Bounds}\label{sec: opt bounds}

In this section we show that when the cost is a power of a $p$-norm, numerically solving the $\mathcal{W}_\infty$-optimal transport problem with a small error is at least as hard as the determining if a transport graph has a perfect matching. In particular for the square euclidean cost we reduce the problem of finding a perfect matching in a transport graph to numerically solving the $\mathcal{W}_\infty$-optimal transport problem within an error of $\frac 1N$. Indeed note that $\eps(N,2,2) = \frac 1N$ in Proposition \ref{prop: p-norm}.

For this section we will write $X_{A, \omega}$ for
\[
X_{A, \omega} = \bigcap_{y\in A} \{x\mid c(x,y) \leq \omega\} \cap \bigcap_{y \not\in A} \{x\mid c(x,y) > \omega\}.
\]	
This is the same $X_A$ as in Definition \ref{def: G_omega}, however we will be varying $\omega$ in this section and so we add it to our notation. 

%
%

\begin{prop}\label{prop: arbitrary graph}
Let $\omegaParameters \subset \R$ and $c, X, Y$ be such that
\[
\bigcap_{\omega \in \omegaParameters} X_{A, \omega} \neq \emptyset
\]
for every $A \subset Y$. 

Then for every transport graph $G$, there exists a pair of probability measures $(\mu, \nu)$ so that $G=G_\omega$ for every $\omega \in \omegaParameters$ where $G_\omega$ is the transportation graph defined using $(\mu, \nu)$ in Definition \ref{def: G_omega}. 
\end{prop}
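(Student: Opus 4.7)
The plan is to construct $\mu$ and $\nu$ directly as finitely supported probability measures whose atoms are chosen so that the weights of $G_\omega$ recover those of the given graph $G$ for every $\omega \in \omegaParameters$ simultaneously. Since every transport graph has vertex sets $L = 2^Y$ and $R = Y$ with edges determined purely by the containment relation $y \in A$, the edge set of $G_\omega$ agrees with that of $G$ automatically; only the vertex weights need to be matched.

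First I would set $\nu := \sum_{y \in Y} w(y) \delta_y$, which is a probability measure because the right-side weights of $G$ sum to $1$, and which satisfies $\nu(\{y\}) = w(y)$ for every $y \in Y$. For the source, I would invoke the hypothesis to choose, for each $A \subset Y$, some point $x_A \in \bigcap_{\omega \in \omegaParameters} X_{A, \omega}$, and then define $\mu := \sum_{A \subset Y} w(A) \delta_{x_A}$. This is again a probability measure since the left-side weights of $G$ sum to $1$.

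The verification that $G = G_\omega$ for every $\omega \in \omegaParameters$ then reduces to a short computation: fix any such $\omega$ and any $A \subset Y$; by construction $x_A \in X_{A, \omega}$, while for $B \neq A$ the point $x_B$ lies in $X_{B, \omega}$, which is disjoint from $X_{A, \omega}$ by Lemma \ref{lem: X_A disjoint partition}. Hence $\mu(X_{A, \omega}) = \sum_{B \subset Y} w(B)\, \delta_{x_B}(X_{A, \omega}) = w(A)$, matching the left vertex weights of $G$ for this (and every) $\omega \in \omegaParameters$.

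The only step that requires any care is the second one, where the nonempty-intersection hypothesis is used; indeed, this hypothesis is exactly what is needed to guarantee that a single atom $x_A$ can serve for all values of $\omega$ at once. Without it one could still realize $G$ as $G_\omega$ for any individual $\omega$, but not uniformly across all of $\omegaParameters$. Once this simultaneous selection is available, the disjoint partition property supplied by Lemma \ref{lem: X_A disjoint partition} does the rest of the work with essentially no further effort, so I would not expect any serious obstacle beyond unpacking the definitions.
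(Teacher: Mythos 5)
Your proposal is correct and follows essentially the same route as the paper: choose $x_A \in \bigcap_{\omega \in \Lambda} X_{A,\omega}$, set $\mu = \sum_{A \subset Y} w(A)\,\delta_{x_A}$ and $\nu = \sum_{y\in Y} w(y)\,\delta_y$, and use the disjointness from Lemma \ref{lem: X_A disjoint partition} to conclude $\mu(X_{A,\omega}) = w(A)$ for every $\omega \in \Lambda$. No gaps.
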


\begin{proof}	
Fix a transport graph $G$ with vertex weight function $w$. For each $A \subset Y$ choose a point $x_A \in \bigcap_{\omega \in \omegaParameters} X_{A, \omega}$, then define $\mu$ by $\mu = \sum_{A \subset Y} w(A) \delta_{x_A}$ and $\nu$ by $\nu = \sum_{y \in Y} w(y) \delta_{y}$. 
 Since for each $\omega\in \Lambda$, $\{X_{A, \omega}\}_{A\in 2^Y}$ is a disjoint collection  by Lemma \ref{lem: X_A disjoint partition} and we have $x_A \in X_{A, \omega}$, we see $\mu(X_{A, \omega}) = w(A)$ and so $G_\omega = G$. 
\end{proof}

\begin{prop}\label{prop: matching dichotomy}
Suppose $G$ is a transport graph, $\omegaParameters \subset \R$, and $\mu\in \mathcal{P}(X)$, $\nu\in \mathcal{P}(Y)$ are measures such that $G_\omega = G$ for every $\omega \in \omegaParameters$. Then 
\begin{enumerate}
\item   $\inf_{\ti \pi \in \Pi(\mu, \nu)} \norm {c}_{L^\infty(\ti \pi)} \leq \inf \omegaParameters$ if and only if $G$ has a perfect matching
\item $\inf_{\ti \pi \in \Pi(\mu, \nu)} \norm {c}_{L^\infty(\ti \pi)} \geq \sup \omegaParameters$ if and only if $G$ does not have a perfect matching,
\end{enumerate}
hence in all cases
\begin{align*}
 \inf_{\ti \pi \in \Pi(\mu, \nu)} \norm {c}_{L^\infty(\ti \pi)} \in (-\infty, \inf\omegaParameters]\cup [\sup\omegaParameters, \infty).
\end{align*}
In particular it suffices to solve \eqref{eqn: Kantorovich} with this choice of $\mu$ and $\nu$ to an error of less than $\frac{\diam \omegaParameters}2$ in order to determine if $G$ has a perfect matching. 	
\end{prop}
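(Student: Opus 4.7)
The plan is to reduce both statements directly to Theorem \ref{thm: matching and plan correspondence}, using the hypothesis $G_\omega = G$ for every $\omega \in \omegaParameters$ to transfer the existence criterion for a plan of cost at most $\omega$ onto the single fixed graph $G$.

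First I would establish the reverse implication of (1). If $G$ has a perfect matching, then for each $\omega \in \omegaParameters$ the graph $G_\omega = G$ has a perfect matching, so Theorem \ref{thm: matching and plan correspondence} supplies a transport plan with $\infty$-transport cost at most $\omega$; hence $\inf_{\ti \pi \in \Pi(\mu, \nu)} \norm{c}_{L^\infty(\ti\pi)} \leq \omega$ for every $\omega \in \omegaParameters$, and taking the infimum over $\omegaParameters$ yields the bound $\leq \inf \omegaParameters$. Symmetrically for the reverse implication of (2): if $G$ has no perfect matching, Theorem \ref{thm: matching and plan correspondence} rules out plans of cost at most $\omega$ for every $\omega \in \omegaParameters$, so every $\ti\pi \in \Pi(\mu, \nu)$ satisfies $\norm{c}_{L^\infty(\ti\pi)} > \omega$ for each such $\omega$, which forces $\norm{c}_{L^\infty(\ti\pi)} \geq \sup \omegaParameters$ and hence the same bound on the infimum.

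The displayed dichotomy $\inf_{\ti\pi}\norm{c}_{L^\infty(\ti\pi)} \in (-\infty, \inf\omegaParameters] \cup [\sup\omegaParameters, \infty)$ is then immediate: since $G$ either admits a perfect matching or does not, one of the two bounds just proved applies. The forward implications of (1) and (2) follow by contrapositive: in the nondegenerate case $\inf \omegaParameters < \sup \omegaParameters$, the two half-lines are essentially disjoint, so each of the two bounds on the infimum pins down exactly one of the two alternatives for $G$.

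For the final claim about numerical approximation, any computed value within error strictly less than $\diam \omegaParameters / 2 = (\sup \omegaParameters - \inf \omegaParameters)/2$ of the true infimum must lie on a determined side of the midpoint $(\inf \omegaParameters + \sup \omegaParameters)/2$, allowing one to identify which of the two half-lines contains the true value and hence, by (1) and (2), whether $G$ has a perfect matching. I anticipate no substantive obstacle, since the structural content is already packaged in Theorem \ref{thm: matching and plan correspondence}; the only care required is correctly negating ``cost at most $\omega$'' as ``cost strictly greater than $\omega$'' and tracking strict versus non-strict inequalities through the dichotomy.
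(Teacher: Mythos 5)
Your proposal is correct and follows essentially the same route as the paper: both directions are reduced to Theorem \ref{thm: matching and plan correspondence} via the hypothesis $G_\omega = G$ for all $\omega \in \Lambda$, the dichotomy follows from the two one-sided bounds, and the final claim is the same observation that an approximation within error less than $\frac{\diam\Lambda}{2}$ singles out one of the two half-lines. Your explicit attention to the strict-versus-nonstrict inequalities and to the nondegenerate case $\inf\Lambda < \sup\Lambda$ is, if anything, slightly more careful than the paper's treatment.
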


\begin{proof}	
Suppose that $G$ has a perfect matching. Then for every $\omega \in \omegaParameters$, by Theorem \ref{thm: matching and plan correspondence} there exists a transport plan $\pi\in \Pi(\mu, \nu)$ so that $\norm {c}_{L^\infty(\pi)} \leq \omega$, hence $\inf_{\ti \pi \in \Pi(\mu, \nu)} \norm {c}_{L^\infty(\ti \pi)} \leq \inf \omegaParameters$.

Now suppose that $G$ does not have a perfect matching. Then for every $\omega \in \omegaParameters$, again by Theorem \ref{thm: matching and plan correspondence} there cannot exist any transport plan $\pi\in \Pi(\mu, \nu)$ with $\norm {c}_{L^\infty(\pi)} \leq \omega$, hence $\inf_{\ti \pi \in \Pi(\mu, \nu)} \norm {c}_{L^\infty(\ti \pi)} \geq \omega$. In particular we obtain $\inf_{\ti \pi \in \Pi(\mu, \nu)} \norm {c}_{L^\infty(\ti \pi)} \geq \sup \omegaParameters$. 

For the final claim, any interval of length $\eps < \diam \omegaParameters$ can only intersect one of $(-\infty, \inf \omegaParameters]$ or $[\sup \omegaParameters, \infty)$. Thus determining $\inf_{\ti \pi \in \Pi(\mu, \nu)} \norm {c}_{L^\infty(\ti \pi)}$ to within an error of $\epsilon$ will indicate which of the two cases above we are in, and hence if there is a perfect matching or not.
\end{proof}

\begin{prop}\label{prop: p-norm}

Let $X = \R^N$, $y_i = e_i$ for $i \in \{1, \dots, N \}$, $q>0$, $p>1$, and $c = \norm{\cdot}_p^q$. Then the hypotheses of Proposition \ref{prop: matching dichotomy} are satisfied with $\omegaParameters = (1 - \eps(N,p,q) ,1)$ where 
\[
\eps(N,p,q) := 1 - \bigg((1- (1+(N-1)^{\frac 1 {p-1}})^{-1})^p + (N-1)(1+(N-1)^{\frac 1 {p-1}})^{-p} \bigg)^{q/p} > 0.
\] 
In other words
\[
\bigcap_{\omega \in  (1 - \eps(N, p, q),1)} X_{A, \omega} \neq \emptyset
\]
for every $A \subset Y$.

\end{prop}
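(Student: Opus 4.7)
The plan is to construct, for each $A \subset Y$, an explicit point $x_A \in \R^N$ that lies in the intersection $\bigcap_{\omega \in (1-\eps,1)} X_{A,\omega}$. Unpacking the definition, membership in this intersection is equivalent to the two conditions $\|x_A - e_i\|_p^q \leq 1 - \eps$ for every $y_i \in A$ and $\|x_A - e_j\|_p^q \geq 1$ for every $y_j \notin A$ (the latter because we need strict inequality against every $\omega < 1$). The construction is: $x_\emptyset := 0$, and $x_A := \alpha^{*} \sum_{y_i \in A} e_i$ for $A \neq \emptyset$, where $\alpha^{*} := (1+(N-1)^{1/(p-1)})^{-1}$.

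The case $A = \emptyset$ is immediate, since $\|0 - e_j\|_p = 1$ for every $j$. For $A \neq \emptyset$ with $|A| = k$, a direct expansion of the $p$-norm coordinate by coordinate gives
\[
\|x_A - e_j\|_p^p = (1-\alpha^{*})^p + (k-1)(\alpha^{*})^p \quad \text{if } y_j \in A,
\]
\[
\|x_A - e_j\|_p^p = 1 + k(\alpha^{*})^p \quad \text{if } y_j \notin A.
\]
The "outside" estimate is $> 1$, so raising to the $q/p$ power yields $\|x_A - e_j\|_p^q > 1$ as required, regardless of $\alpha^{*} > 0$.

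The "inside" estimate is handled by choosing $\alpha^{*}$ to minimize $g(\alpha) := (1-\alpha)^p + (N-1)\alpha^p$ on $(0,1)$; this is the worst case because the expression $(1-\alpha)^p + (k-1)\alpha^p$ is monotone nondecreasing in $k$. Elementary calculus applied to $g$ (set $g'(\alpha) = -p(1-\alpha)^{p-1} + p(N-1)\alpha^{p-1} = 0$ and solve) yields the unique critical point $\alpha^{*} = (1+(N-1)^{1/(p-1)})^{-1}$, and substituting this value shows
\[
g(\alpha^{*}) = \bigl(1 - (1+(N-1)^{1/(p-1)})^{-1}\bigr)^p + (N-1)(1+(N-1)^{1/(p-1)})^{-p},
\]
which is precisely $(1-\eps(N,p,q))^{p/q}$ by the definition of $\eps$. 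Raising to the $q/p$ power then gives $\|x_A - e_i\|_p^q \leq 1 - \eps$ for all $y_i \in A$.

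Finally, strict positivity of $\eps$ follows from $g(0) = 1$ together with $g'(0) = -p < 0$ (using $p > 1$), which forces $g(\alpha^{*}) < 1$. The only genuine computation in this argument is the expansion of $g(\alpha^{*})$ and verifying it matches the formula in the statement; that is routine, so I do not anticipate a real obstacle. The main conceptual point to emphasize is the monotonicity observation that reduces all $A$ to the case $|A| = N$, and the automatic nature of the outside constraint, which is what makes a single value of $\alpha^{*}$ (independent of $A$) work uniformly.
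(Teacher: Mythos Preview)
Your argument is correct and essentially identical to the paper's: the same witness $x_A = \alpha^{*}\sum_{y_i\in A} e_i$ with $\alpha^{*}=(1+(N-1)^{1/(p-1)})^{-1}$, the same coordinate expansion giving $(1-\alpha^{*})^p+(k-1)(\alpha^{*})^p$ and $1+k(\alpha^{*})^p$, the same monotonicity-in-$k$ reduction to the worst case $k=N$, and the same positivity argument via $g'(0)<0$. Separating out $A=\emptyset$ is harmless but unnecessary, since your general formula already gives $x_\emptyset=0$.
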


\begin{proof}
Set $\alpha := (1+(N-1)^{\frac 1 {p-1}})\i$ and for each $A\subset Y$, define $x_A \in \R^N$ by
\[
x_A^i :=
\begin{cases}
\alpha, & \text{ if } y_i \in A,\\
0, & \text{ else.}
\end{cases}
\]
We claim that $x_A \in 	\bigcap_{\omega \in \omegaParameters} X_{A, \omega}$. First fix some $y_k=e_k \not\in A$. Then
\[
\norm{x_A - y_k}_p^q = (1 + \alpha^p {\abs{A}})^{q/p}  \geq 1
\]
and so $c(x_A, y_k) \geq \omega$ for every $\omega \in (1 - \eps(N,p,q) ,1)$. Next fix some $y_k=e_k \in A$. We have
\[
\norm{x_A - y_k}_p^q
= \bigg((1- \alpha)^p + \alpha^p {(\abs{A}-1)} \bigg)^{q/p}
\leq \bigg((1- \alpha)^p + \alpha^p {(N-1)} \bigg)^{q/p}
= 1 - \eps(p,q,N)
\]
and so $c(x_A, y_k) \leq \omega$ for every $\omega \in \omegaParameters$. This shows that $x_A \in \bigcap_{\omega \in \omegaParameters} X_{A, \omega}$ as desired. 

We note that $\eps(N,p,q) > 0$ since $\alpha$ is the minimizer of of the function $g(t) := (1- t)^p + t^p {(N-1)}$ over $t \in [0,1]$. Since $p>1$, it is not hard to see that $g'<0$ near $0$, hence $g(t) < g(0)=1$ when $t < 1$ is very close to $0$, thus we obtain $g(\alpha)^{q/p} < 1$. 
\end{proof}


%
%
%

%
%

\section{Numerical examples}\label{sec: numerics}

\subsection{Description of Algorithm}
The proposed algorithm is a bisection algorithm that estimates the value of the optimal $\infty$-transport cost, and then produces an approximation of the solution to the decision problem. Suppose the optimal cost $\mathcal{W}^c_\infty(\mu, \nu)$ is known to lie in some interval $[\omega_1, \omega_2]$. We then query a decision algorithm see if it possible to produce a plan with cost less than $\frac{\omega_1 + \omega_2}{2}$, or in other words, whether $\mathcal{W}^c_\infty(\mu, \nu)$ lies in the upper or lower half of the interval $[\omega_1, \omega_2]$. We then divide the interval $[\omega_1, \omega_2]$ in half, and recursively continue the process until we reach a plan whose transport cost is within some specified error tolerance of the true value $\mathcal{W}^c_\infty(\mu, \nu)$. Note that if $c$ is bounded (which we will assume for the remainder of the paper), we may always begin with the choice $[\omega_1, \omega_2]=[\min c, \max c]$.

\begin{prop}\label{prop: hall matching}
	
If $G$ is a transport graph, it has a perfect matching if and only if for every $A \in L$, $\sum_{l\in A} w(l) \leq \sum_{r \in \Gamma(A)} w(r)$ (recall Definition \ref{def: graph def}).
\end{prop}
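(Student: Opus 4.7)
The statement is a weighted analog of Hall's marriage theorem, with vertex weights $w$ playing the role of multiplicities. The forward direction is an immediate double-counting argument: if $M$ is a perfect matching and $A \subseteq L$, then mass leaving $A$ under $M$ is supported on $\Gamma(A)$, so
\[
\sum_{l\in A} w(l) = \sum_{l\in A}\sum_{r\in R} M(l,r) = \sum_{l\in A}\sum_{r\in\Gamma(A)} M(l,r) \le \sum_{r\in\Gamma(A)}\sum_{l\in L} M(l,r) = \sum_{r\in \Gamma(A)} w(r).
\]

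For the substantive converse direction, my plan is to recast the existence of a perfect matching as a max-flow problem. I would build an auxiliary directed network with a source $s$ and sink $t$, arcs $s \to l$ of capacity $w(l)$ for each $l\in L$, arcs $r\to t$ of capacity $w(r)$ for each $r\in R$, and arcs $l\to r$ of infinite capacity for every edge $\{l,r\}$ of $G$. Since the weights on each side sum to $1$, an $s$-$t$ flow of value $1$ decomposes into exactly a perfect matching of $G$. The trivial cut $\{s\}$ versus the rest has capacity $\sum_{l\in L} w(l) = 1$, so by max-flow min-cut it suffices to check that no $s$-$t$ cut has capacity strictly less than $1$.

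Here the hypothesis enters cleanly. Any finite-capacity cut $(S, V\setminus S)$ must avoid the infinite-capacity arcs, which forces $\Gamma(S\cap L) \subseteq S\cap R$. Setting $A := S\cap L$, the capacity of the cut is
\[
\sum_{l\in L\setminus A} w(l) + \sum_{r \in S\cap R} w(r) \ge \Bigl(1 - \sum_{l\in A} w(l)\Bigr) + \sum_{r\in \Gamma(A)} w(r) \ge 1,
\]
the final inequality being exactly the Hall-type condition applied to $A$. The one technical point I anticipate as the main obstacle is that the weights are real-valued rather than rational, so one must either invoke a form of max-flow min-cut valid for real capacities, or equivalently argue by linear programming duality for the associated transportation LP; either route avoids any need for an explicit approximation step and yields a valid matching $M$ realizing the flow.
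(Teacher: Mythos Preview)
Your proposal is correct and follows essentially the same route as the paper: the paper's proof is simply a pointer to Bollob{\'a}s's max-flow min-cut derivation of Hall's theorem, reinterpreting $|S|$ as total weight, which is exactly the auxiliary-network argument you spell out in detail. Your flagged technical point about needing max-flow min-cut for real capacities (handled via LP duality) is the only subtlety, and it is a genuine one, but it does not affect the validity of the approach.
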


\begin{proof}	
This is a version of Hall's theorem and is essentially the same as the first proof of \cite[Section III.3, Theorem 7]{Bollobas1998}. We interpret $\abs{S}$ as the the sum of the weights in $S$, use the version of the max-flow min-cut problem in \cite[Section III.1, Theorem 4]{Bollobas1998}, and note for a transport graph, Bollob{\'a}s's notional of complete matching implies perfect matching. 
\end{proof}

\begin{algorithm}[H]
	\DontPrintSemicolon
	
	\KwIn{A transport graph $G$.}
	\KwOut{True or False}

	\For{$A \subset R$}
	{
	 \If{$\sum_{l\in A} w(l) > \sum_{r \in \Gamma(A)} w(r)$, } 
		{			
	\Return{False} \;		}
	}
	 
	\Return{True}

	\caption{Maximal Matching Algorithm, Hall Matching}
	\label{alg: Maximal Matching Algorithm, Hall Matching}
\end{algorithm}
By Proposition \ref{prop: hall matching} and Theorem \ref{thm: matching and plan correspondence},  Algorithm \ref{alg: Maximal Matching Algorithm, Hall Matching} can be used as the decision algorithm in the binary search process mentioned above. Once we find $\omega$ that is sufficiently close to the optimal value, $\inf_{\ti \pi \in \Pi(\mu, \nu)} \norm {c}_{L^\infty(\ti \pi)}$, we use the Edmonds-Karp algorithm to compute a maximal matching in $G_{\omega}$. Finally from this maximal matching we obtain a transport plan via the method of the proof of Theorem \ref{thm: matching and plan correspondence}.

We remark that Algorithm \ref{alg: Maximal Matching Algorithm, Hall Matching} terminates in $2^N$ steps and that when applied to $G_\omega$ the Edmonds-Karp algorithm terminates in at most $O(N4^N)$ steps, see \cite[Theorem 26.8]{Cormen09}.

\subsection{Numerical Experiments}
In all of the following numerical examples, the source measure $\mu$ is equal to Lebesgue measure (normalized to unit mass) restricted to the square $X=[0, 4]^2\subset 
\R^2$ and the cost function used is $c(x, y)=\lVert x-y\rVert_\infty$. The target will consist of a finite collection of points $Y=\{y_1, \ldots, y_N\}\subset X$ for some $N$. All code has been made  publicly available\footnote{\url{https://github.com/mohit-bansil/W_infinity_2D}}.

For each example below, Figures \ref{figure1-1}, \ref{figure2-1}, and \ref{figure3-1} are graphical representations of the measures
\begin{align*}
\mu_i:= \sum_{A\in 2^Y}\frac{\pi(X_A\times \{y_i\})}{\mu(X_A)\nu(\{y_i\})}\mu\big\vert_{X_A},
\end{align*}
for each point $y_i\in Y$, where $\pi\in \mathcal{P}(X\times Y)$ is the approximate optimal plan produced by the algorithm, the sets $X_A$ are defined as in \eqref{eqn: X_A}, and the quantity $\frac{\pi(X_A\times \{y_i\})}{\mu(X_A)\nu(\{y_i\})}$ is interpreted as $0$ if $\mu(X_A)=0$ (see also \eqref{eqn: pi def}). Effectively, $\mu_i$ is the distribution of mass that is sent to the location $y_i$ under the plan $\pi$.

Figures \ref{figure1-2}, \ref{figure2-2}, and \ref{figure3-2} give the sets $X_A$ for each subset $A\in 2^Y$. Empty cells are displayed in Examples \ref{ex1} and \ref{ex2}, but are excluded in Example \ref{ex3} due to the large number of cells.

In all three examples, the algorithm is run to an upper bound on the error of
\begin{align*}
\abs{\norm{c}_{L^\infty(\pi)}-\mathcal{W}^c_\infty(\mu, \nu)}<10^{-6},
\end{align*}
and all three examples terminate after 24 iterations of Algorithm \ref{alg: Maximal Matching Algorithm, Hall Matching} above.

\begin{ex}\label{ex1}
$y_1=(0, 0)$, $y_2=(0, 4)$, $y_3=(4, 0)$, $y_4=(2, 2)$. $$\nu=0.25(\delta_{y_1}+\delta_{y_2}+\delta_{y_3}+\delta_{y_4})$$
\end{ex}

\begin{figure}[H]
\centering
\begin{mdframed}
\begin{minipage}{0.1\linewidth}
\begin{subfigure}{.05\textwidth}

\begin{tikzpicture}
\pgfplotsset{
colormap={revblackwhite}{gray(0cm)=(1); gray(1cm)=(0)}
}
\pgfplotscolorbardrawstandalone[
    colorbar,
    point meta min=0,
    point meta max=1,
    colorbar style={
        width=0.5cm}
        ]
\end{tikzpicture}%

\end{subfigure}%
\end{minipage}
\begin{minipage}{0.9\linewidth}
\begin{subfigure}{.45\textwidth}
    \centering
    \frame{
    \includegraphics[width=0.45\textwidth]{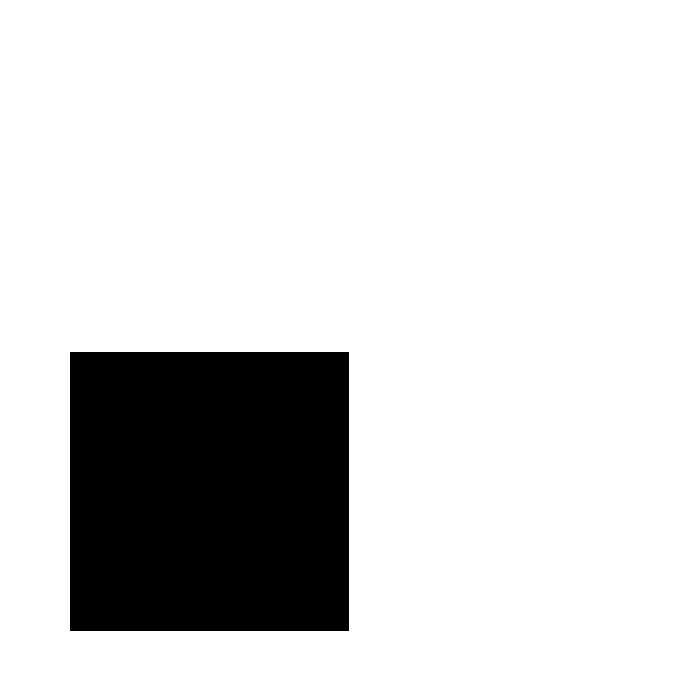}}
    \caption[short]{$\mu_1$}
\end{subfigure}%
\begin{subfigure}{.45\textwidth}
    \centering
    \frame{
    \includegraphics[width=0.45\textwidth]{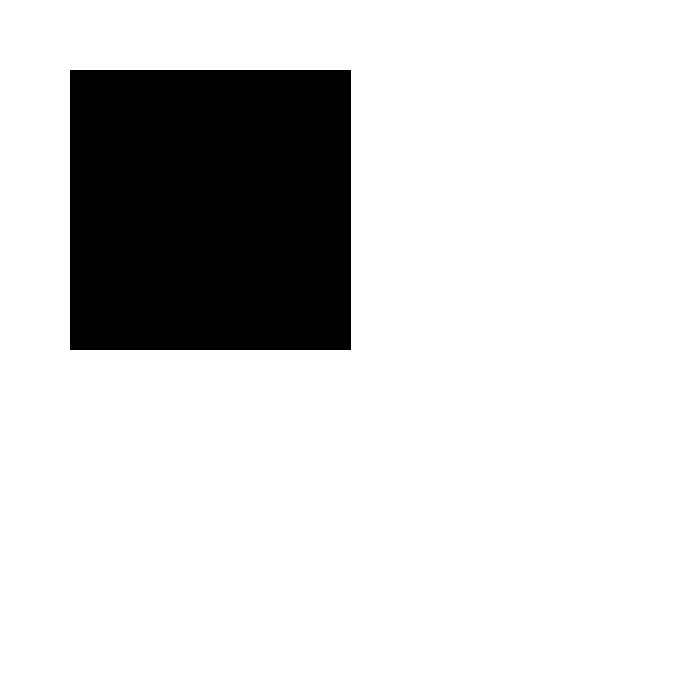}}
    \caption[short]{$\mu_2$}
\end{subfigure}
\begin{subfigure}{.45\textwidth}
    \centering
    \frame{
    \includegraphics[width=0.45\textwidth]{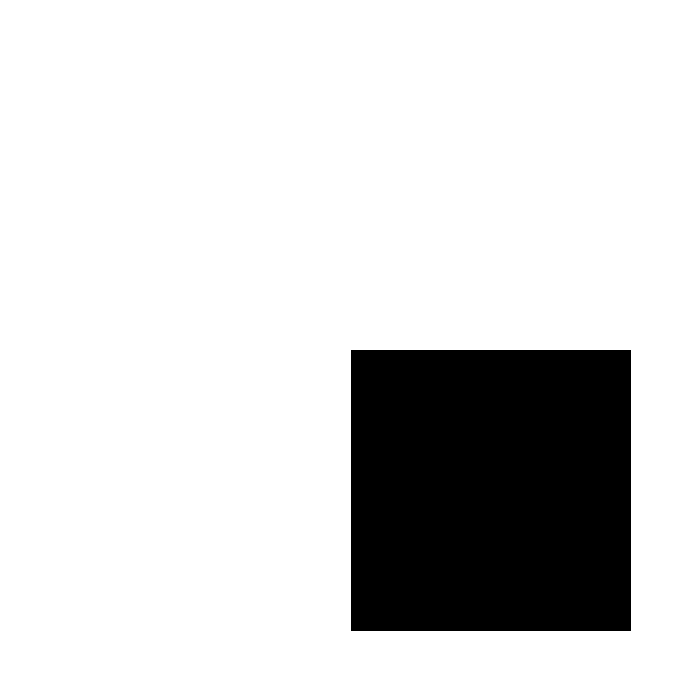}}
    \caption[short]{$\mu_3$}
\end{subfigure}%
\begin{subfigure}{.45\textwidth}
    \centering
    \frame{
    \includegraphics[width=0.45\textwidth]{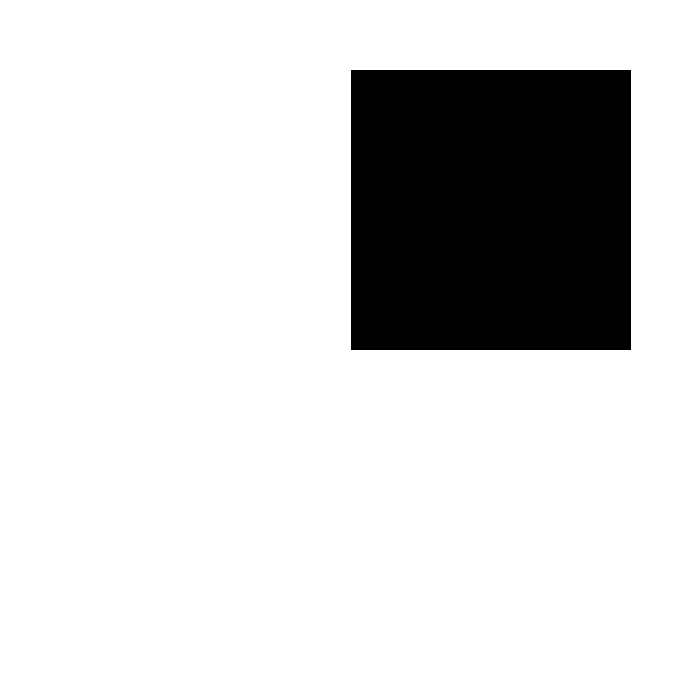}}
    \caption[short]{$\mu_4$}
\end{subfigure}
\end{minipage}
\end{mdframed}
\caption{Transportation of mass: Example \ref{ex1}}\label{figure1-1}
\end{figure}

\begin{figure}[H]
\begin{mdframed}
\centering
\begin{subfigure}{.25\textwidth}
    \centering
    \frame{
    \includegraphics[width=0.25\textwidth]{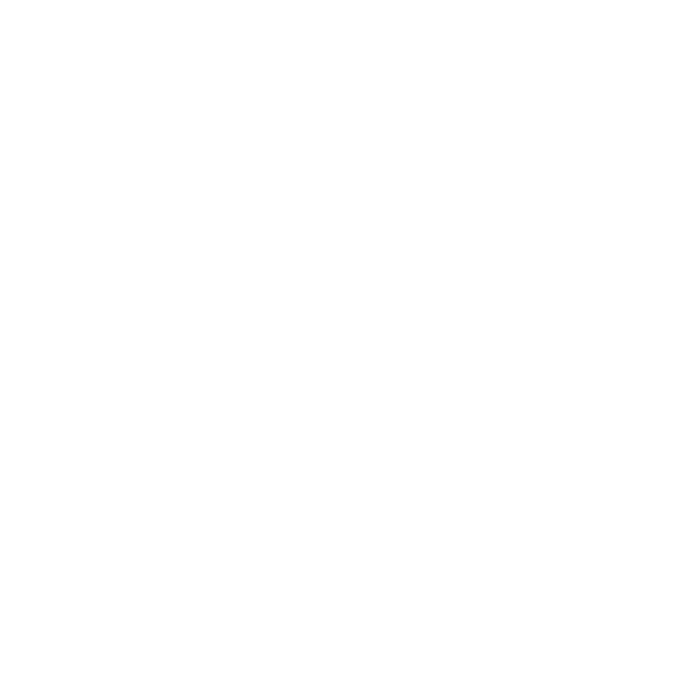}}
    \caption[short]{$A=\{y_1\}$}
\end{subfigure}%
\begin{subfigure}{.25\textwidth}
    \centering
    \frame{
    \includegraphics[width=0.25\textwidth]{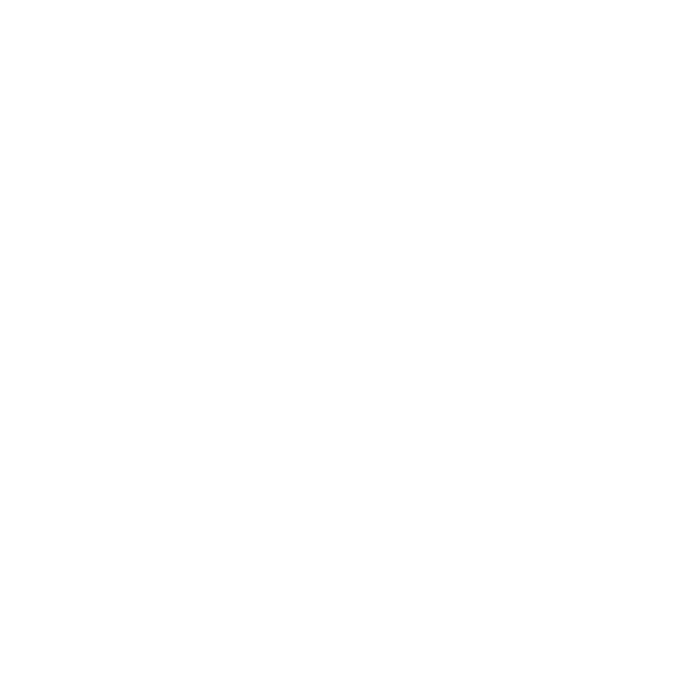}}
    \caption[short]{$A=\{y_2\}$}
\end{subfigure}%
\begin{subfigure}{.25\textwidth}
    \centering
    \frame{
    \includegraphics[width=0.25\textwidth]{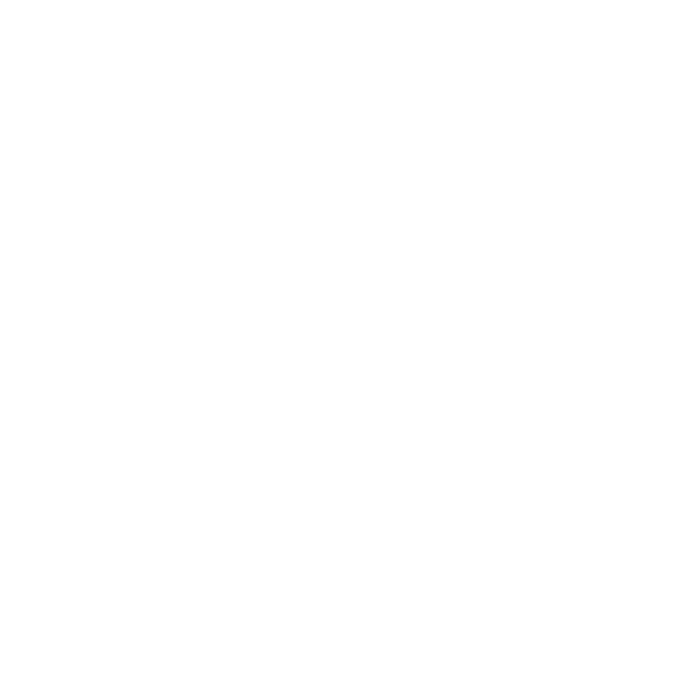}}
    \caption[short]{$A=\{y_3\}$}
\end{subfigure}%
\begin{subfigure}{.25\textwidth}
    \centering
    \frame{
    \includegraphics[width=0.25\textwidth]{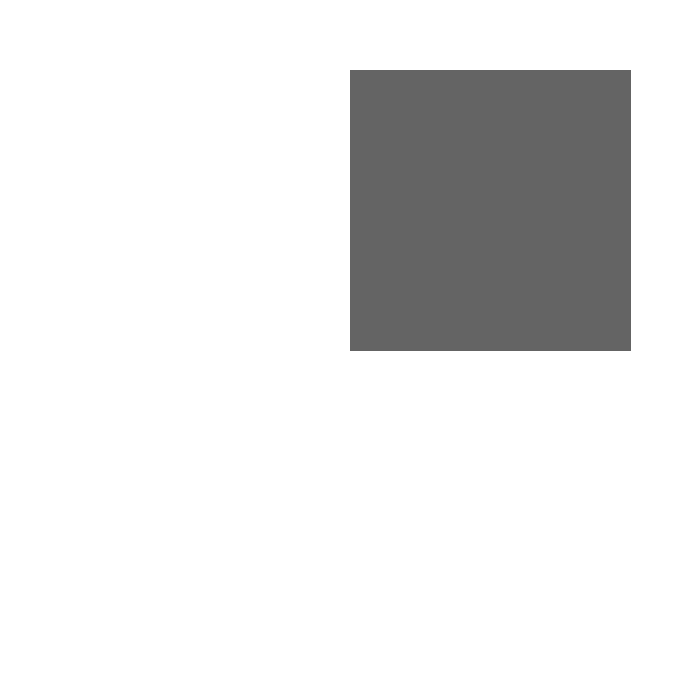}}
    \caption[short]{$A=\{y_4\}$}
\end{subfigure}
\begin{subfigure}{.25\textwidth}
    \centering
    \frame{
    \includegraphics[width=0.25\textwidth]{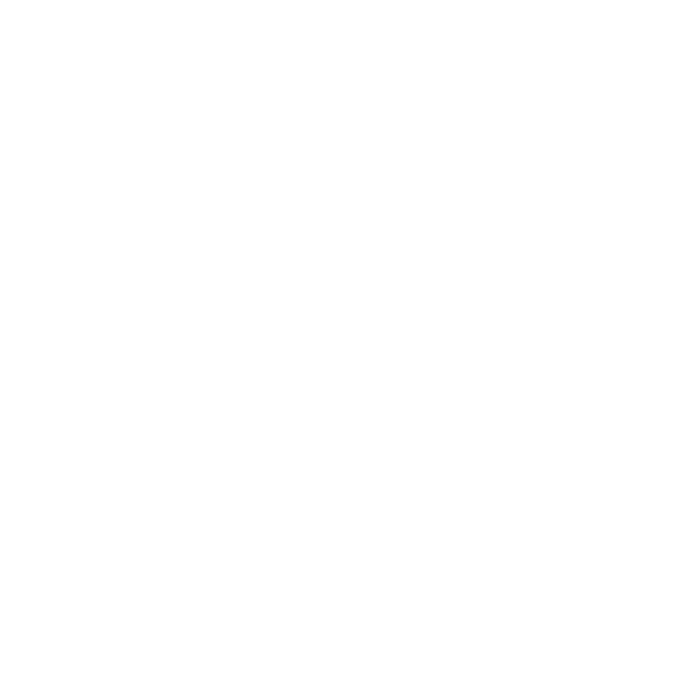}}
    \caption[short]{$A=\{y_1, y_2\}$}
\end{subfigure}%
\begin{subfigure}{.25\textwidth}
    \centering
    \frame{
    \includegraphics[width=0.25\textwidth]{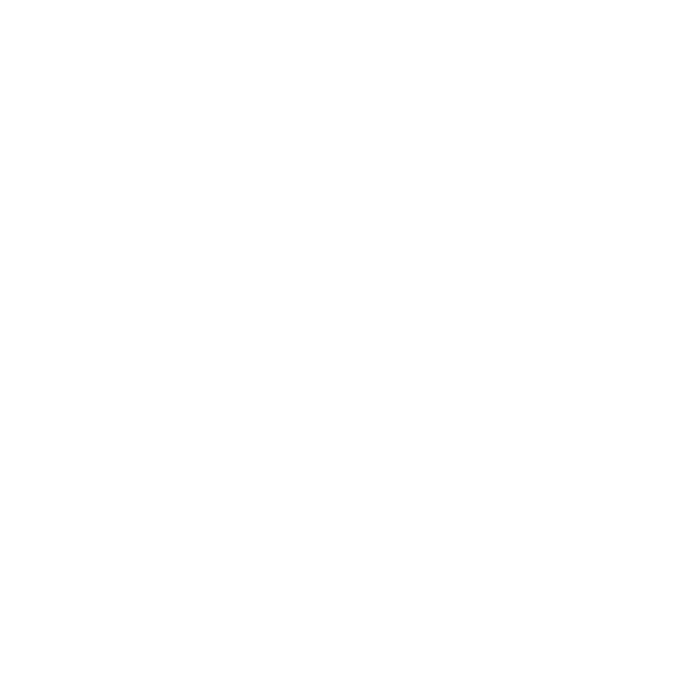}}
    \caption[short]{$A=\{y_1, y_3\}$}
\end{subfigure}%
\begin{subfigure}{.25\textwidth}
    \centering
    \frame{
    \includegraphics[width=0.25\textwidth]{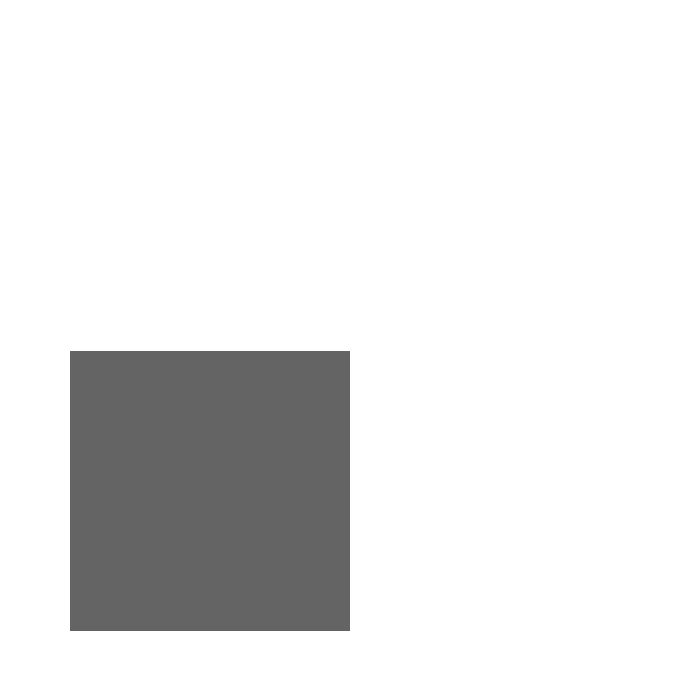}}
    \caption[short]{$A=\{y_1, y_4\}$}
\end{subfigure}%
\begin{subfigure}{.25\textwidth}
    \centering
    \frame{
    \includegraphics[width=0.25\textwidth]{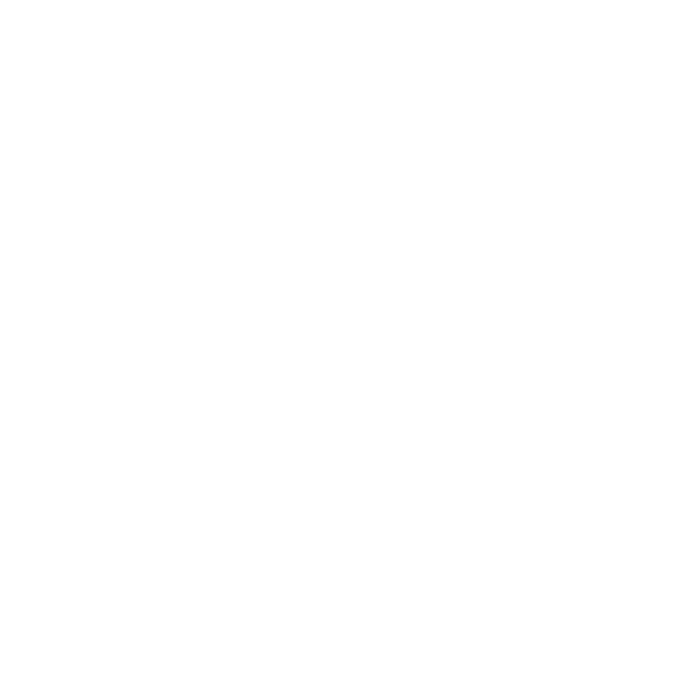}}
    \caption[short]{$A=\{y_2, y_3\}$}
\end{subfigure}
\begin{subfigure}{.25\textwidth}
    \centering
    \frame{
    \includegraphics[width=0.25\textwidth]{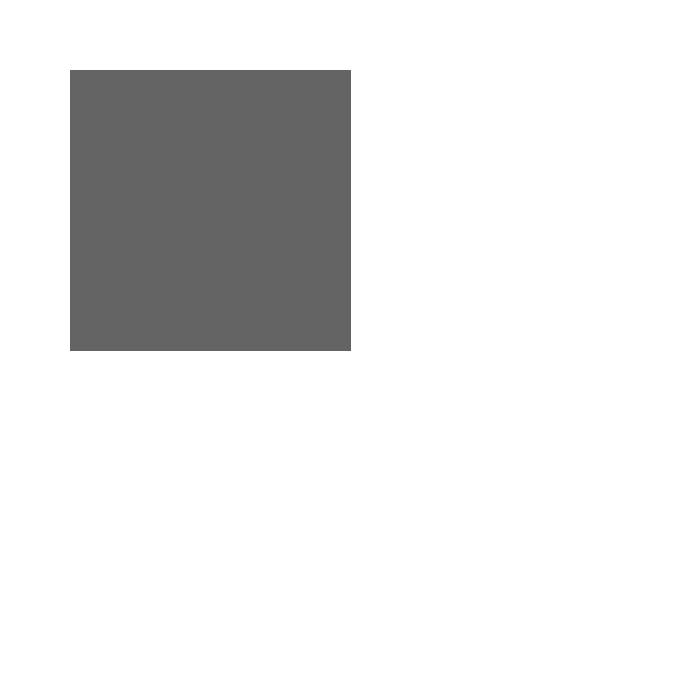}}
    \caption[short]{$A=\{y_2, y_4\}$}
\end{subfigure}%
\begin{subfigure}{.25\textwidth}
    \centering
    \frame{
    \includegraphics[width=0.25\textwidth]{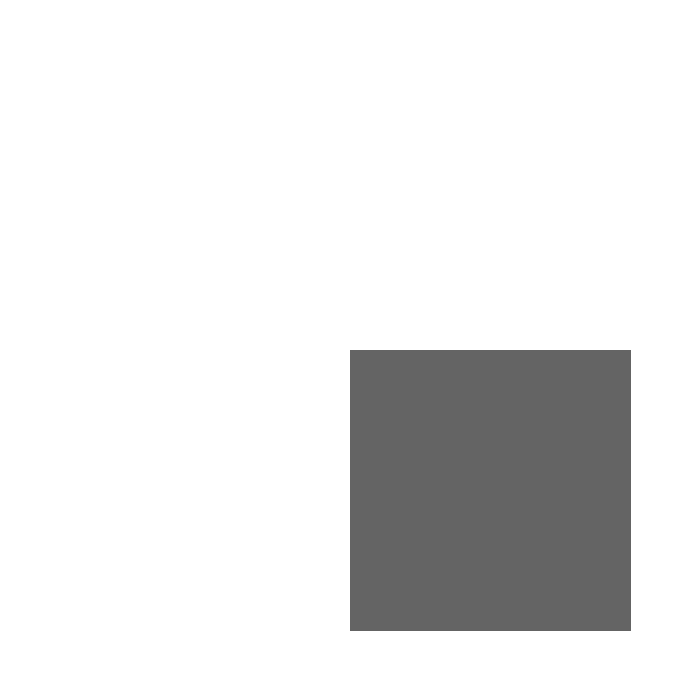}}
    \caption[short]{$A=\{y_3, y_4\}$}
\end{subfigure}%
\begin{subfigure}{.25\textwidth}
    \centering
    \frame{
    \includegraphics[width=0.25\textwidth]{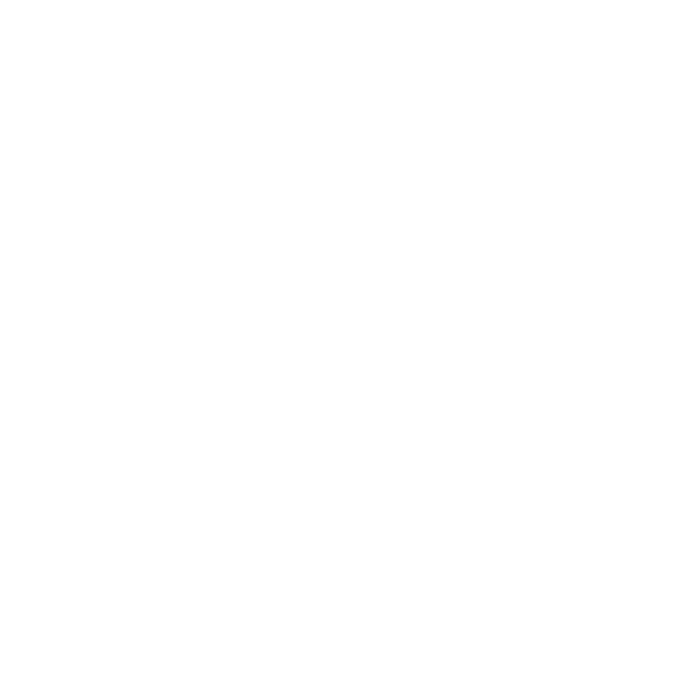}}
    \caption[short]{$A=\{y_1, y_2, y_3\}$}
\end{subfigure}%
\begin{subfigure}{.25\textwidth}
    \centering
    \frame{
    \includegraphics[width=0.25\textwidth]{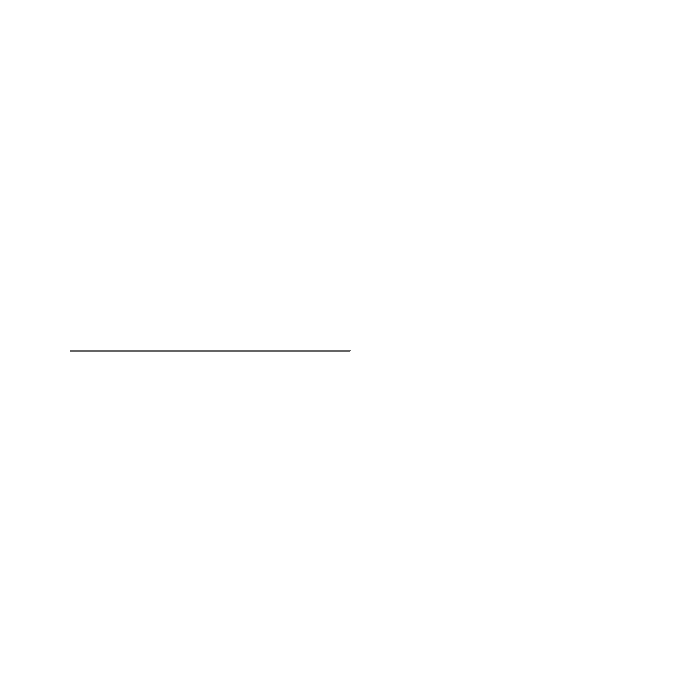}}
    \caption[short]{$A=\{y_1, y_2, y_4\}$}
\end{subfigure}
\raggedright
\begin{subfigure}{.25\textwidth}
    \centering
    \frame{
    \includegraphics[width=0.25\textwidth]{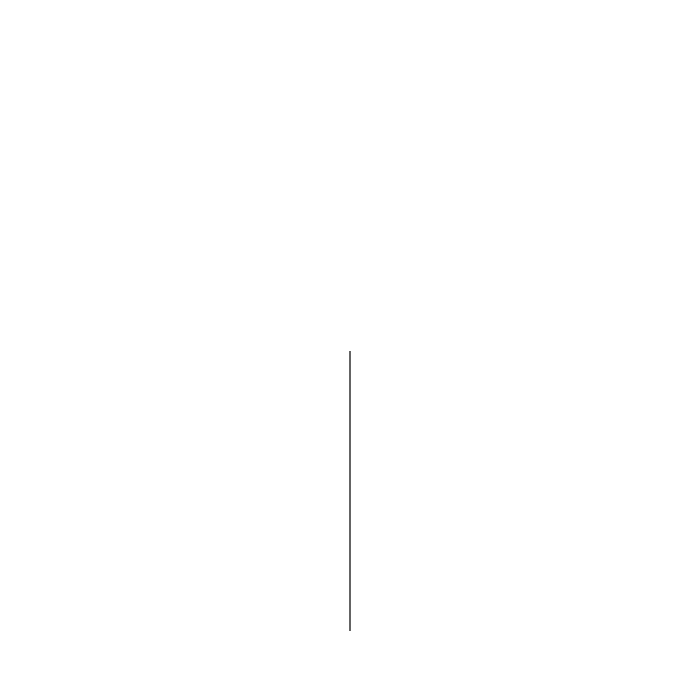}}
    \caption[short]{$A=\{y_1, y_3, y_4\}$}
\end{subfigure}%
\begin{subfigure}{.25\textwidth}
    \centering
    \frame{
    \includegraphics[width=0.25\textwidth]{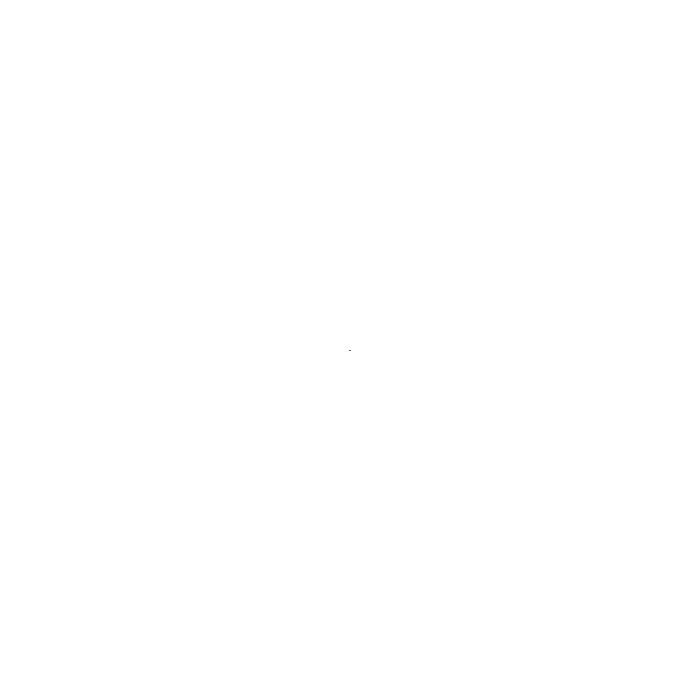}}
    \caption[short]{$A=\{y_2, y_3, y_4\}$}
\end{subfigure}%
\begin{subfigure}{.25\textwidth}
\centering
\frame{
    \includegraphics[width=0.25\textwidth]{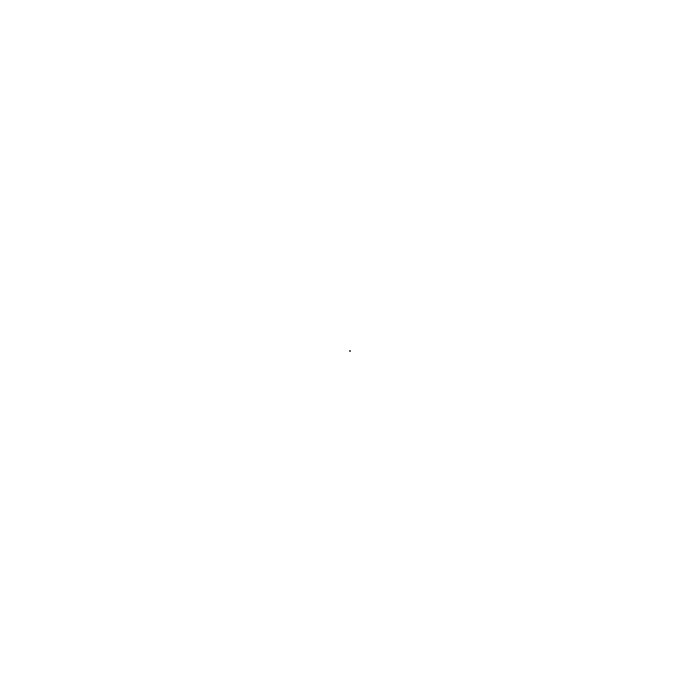}}
    \caption[short]{$A=\{y_1, y_2, y_3, y_4\}$}
\end{subfigure}%
\end{mdframed}
\caption{The cells $X_A$: Example \ref{ex1}}\label{figure1-2}
\end{figure}
\begin{ex}\label{ex2}
The points $y_1,\ldots, y_4$ are taken the same as Example \ref{ex1}, $$\nu=(0.1)\delta_{y_1}+(0.2)\delta_{y_2}+(0.4)\delta_{y_3}+(0.3)\delta_{y_4}$$
\end{ex}

\begin{figure}[H]
\centering
\begin{mdframed}
\begin{minipage}{0.1\linewidth}
\begin{subfigure}{.05\textwidth}

\begin{tikzpicture}
\pgfplotsset{
colormap={revblackwhite}{gray(0cm)=(1); gray(1cm)=(0)}
}
\pgfplotscolorbardrawstandalone[
    colorbar,
    point meta min=0,
    point meta max=1,
    colorbar style={
        width=0.5cm}
        ]
\end{tikzpicture}%

\end{subfigure}%
\end{minipage}
\begin{minipage}{0.9\linewidth}
\begin{subfigure}{.45\textwidth}
    \centering
    \frame{
    \includegraphics[width=0.45\textwidth]{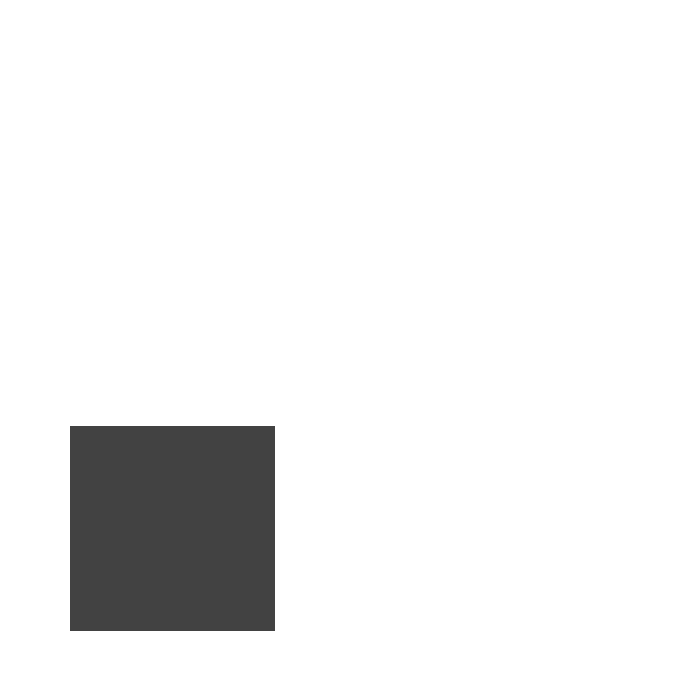}}
    \caption[short]{$\mu_1$}
\end{subfigure}%
\begin{subfigure}{.45\textwidth}
    \centering
    \frame{
    \includegraphics[width=0.45\textwidth]{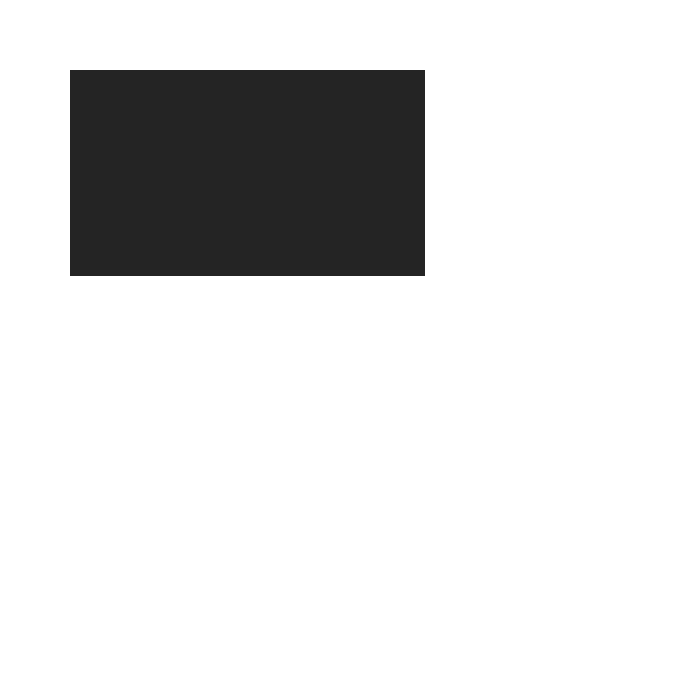}}
    \caption[short]{$\mu_2$}
\end{subfigure}
\begin{subfigure}{.45\textwidth}
    \centering
    \frame{
    \includegraphics[width=0.45\textwidth]{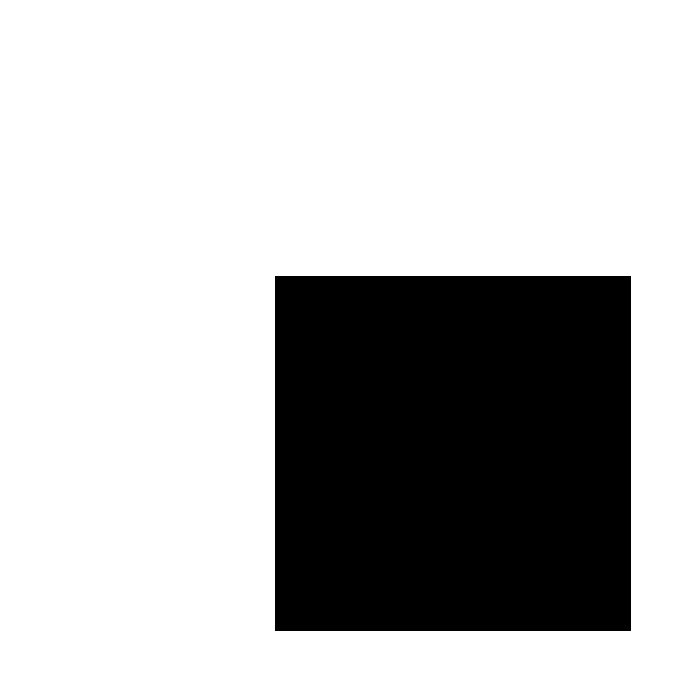}}
    \caption[short]{$\mu_3$}
\end{subfigure}%
\begin{subfigure}{.45\textwidth}
    \centering
    \frame{
    \includegraphics[width=0.45\textwidth]{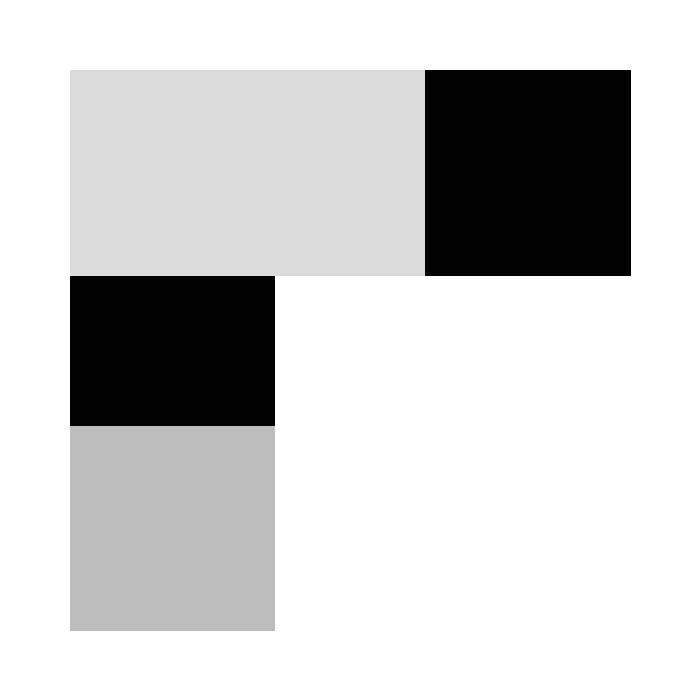}}
    \caption[short]{$\mu_4$}
\end{subfigure}
\end{minipage}
\end{mdframed}
\caption{Transportation of mass: Example \ref{ex2}}\label{figure2-1}
\end{figure}

\begin{figure}[H]
\begin{mdframed}
\centering
\begin{subfigure}{.25\textwidth}
    \centering
    \frame{
    \includegraphics[width=0.25\textwidth]{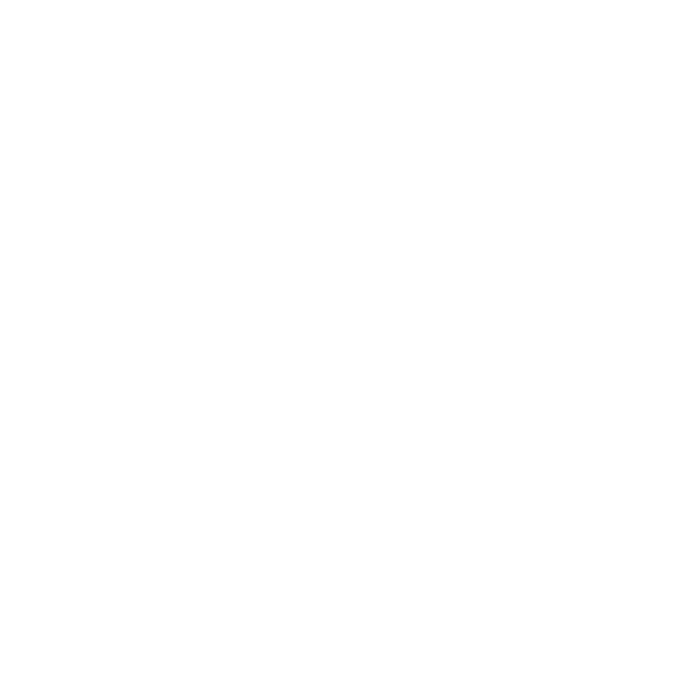}}
    \caption[short]{$A=\{y_1\}$}
\end{subfigure}%
\begin{subfigure}{.25\textwidth}
    \centering
    \frame{
    \includegraphics[width=0.25\textwidth]{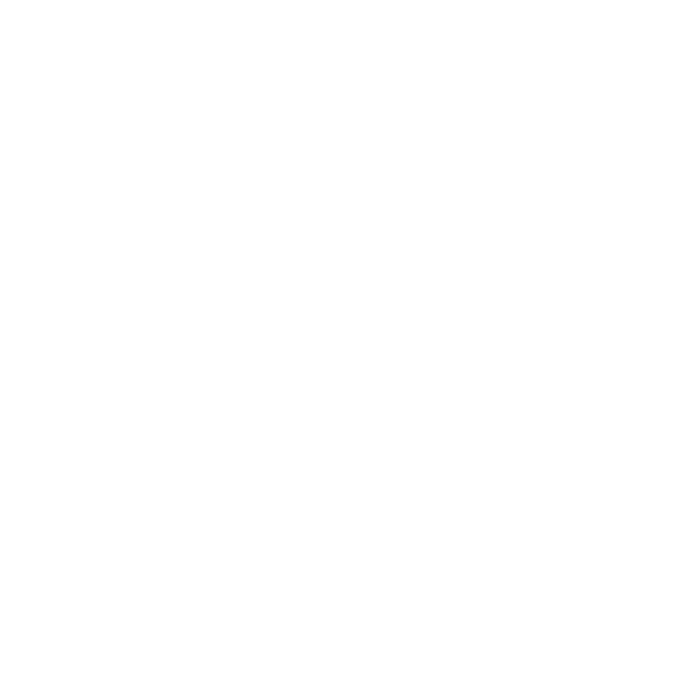}}
    \caption[short]{$A=\{y_2\}$}
\end{subfigure}%
\begin{subfigure}{.25\textwidth}
    \centering
    \frame{
    \includegraphics[width=0.25\textwidth]{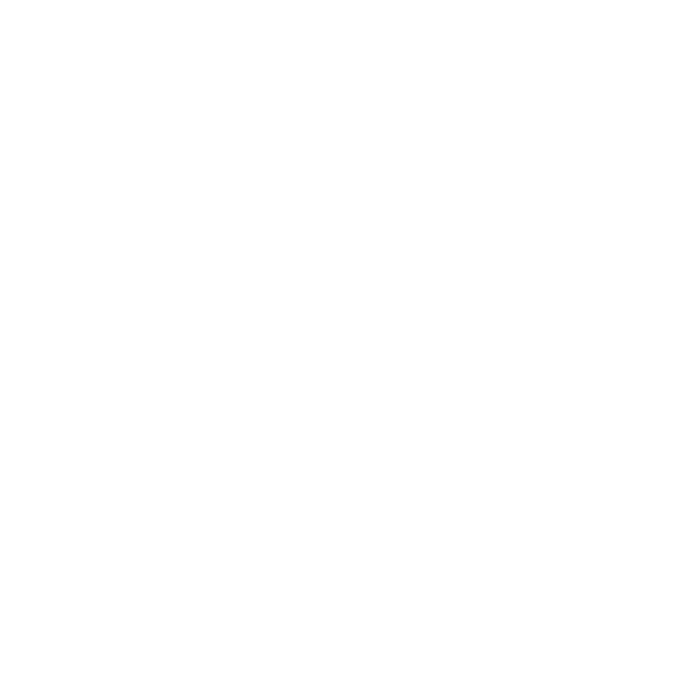}}
    \caption[short]{$A=\{y_3\}$}
\end{subfigure}%
\begin{subfigure}{.25\textwidth}
    \centering
    \frame{
    \includegraphics[width=0.25\textwidth]{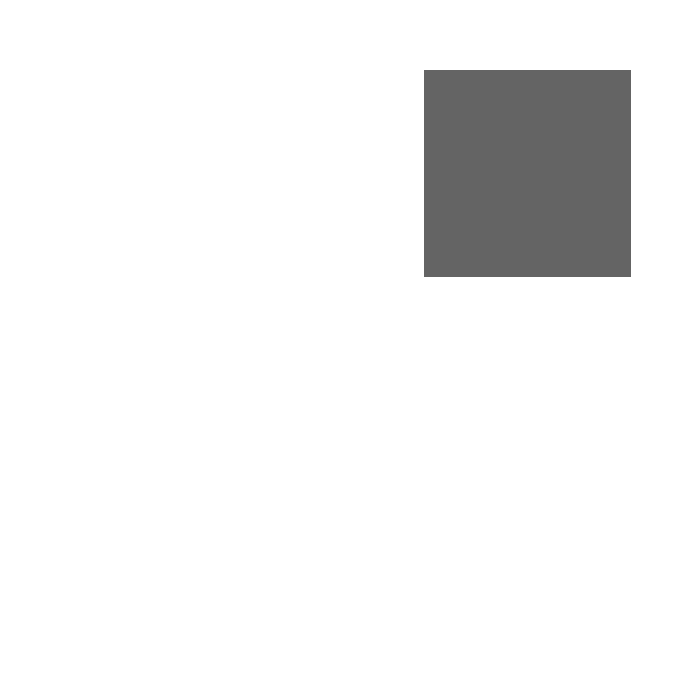}}
    \caption[short]{$A=\{y_4\}$}
\end{subfigure}
\begin{subfigure}{.25\textwidth}
    \centering
    \frame{
    \includegraphics[width=0.25\textwidth]{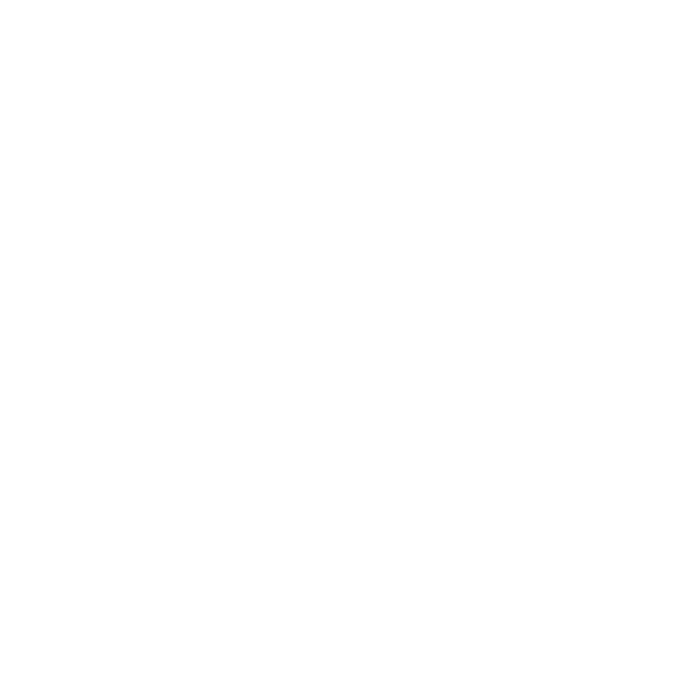}}
    \caption[short]{$A=\{y_1, y_2\}$}
\end{subfigure}%
\begin{subfigure}{.25\textwidth}
    \centering
    \frame{
    \includegraphics[width=0.25\textwidth]{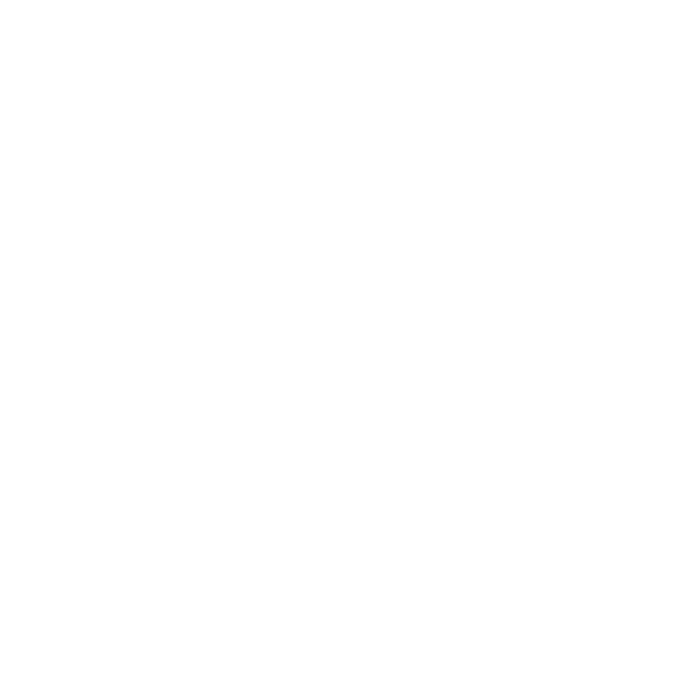}}
    \caption[short]{$A=\{y_1, y_3\}$}
\end{subfigure}%
\begin{subfigure}{.25\textwidth}
    \centering
    \frame{
    \includegraphics[width=0.25\textwidth]{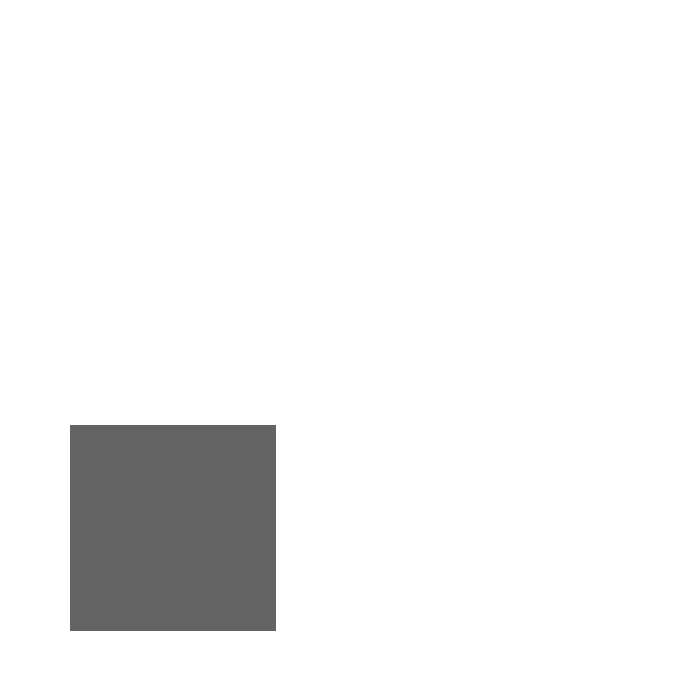}}
    \caption[short]{$A=\{y_1, y_4\}$}
\end{subfigure}%
\begin{subfigure}{.25\textwidth}
    \centering
    \frame{
    \includegraphics[width=0.25\textwidth]{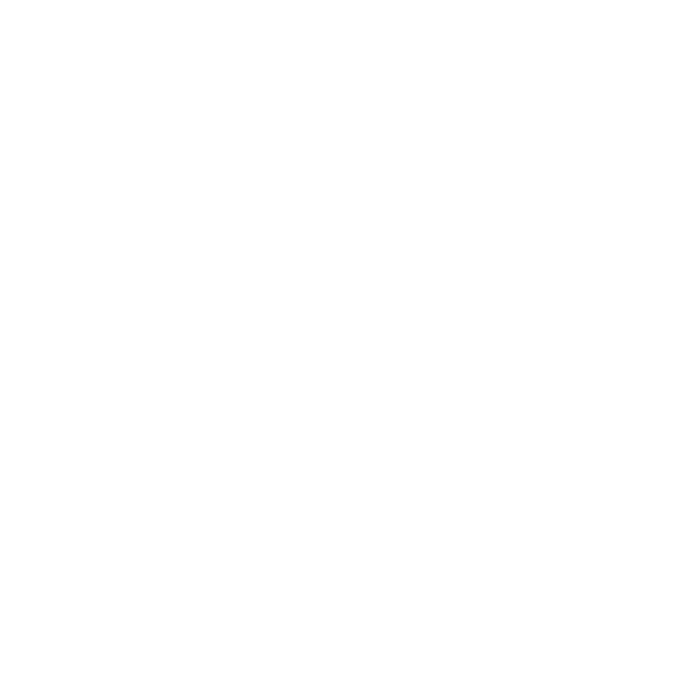}}
    \caption[short]{$A=\{y_2, y_3\}$}
\end{subfigure}
\begin{subfigure}{.25\textwidth}
    \centering
    \frame{
    \includegraphics[width=0.25\textwidth]{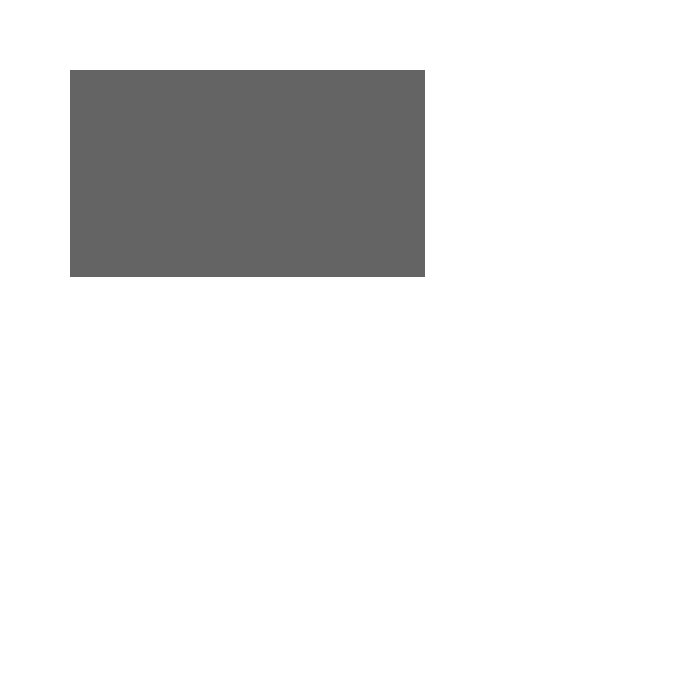}}
    \caption[short]{$A=\{y_2, y_4\}$}
\end{subfigure}%
\begin{subfigure}{.25\textwidth}
    \centering
    \frame{
    \includegraphics[width=0.25\textwidth]{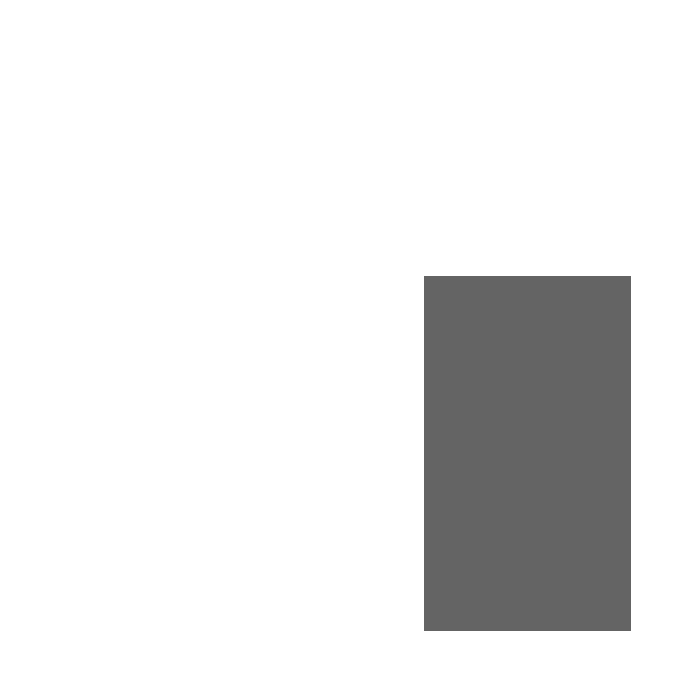}}
    \caption[short]{$A=\{y_3, y_4\}$}
\end{subfigure}%
\begin{subfigure}{.25\textwidth}
    \centering
    \frame{
    \includegraphics[width=0.25\textwidth]{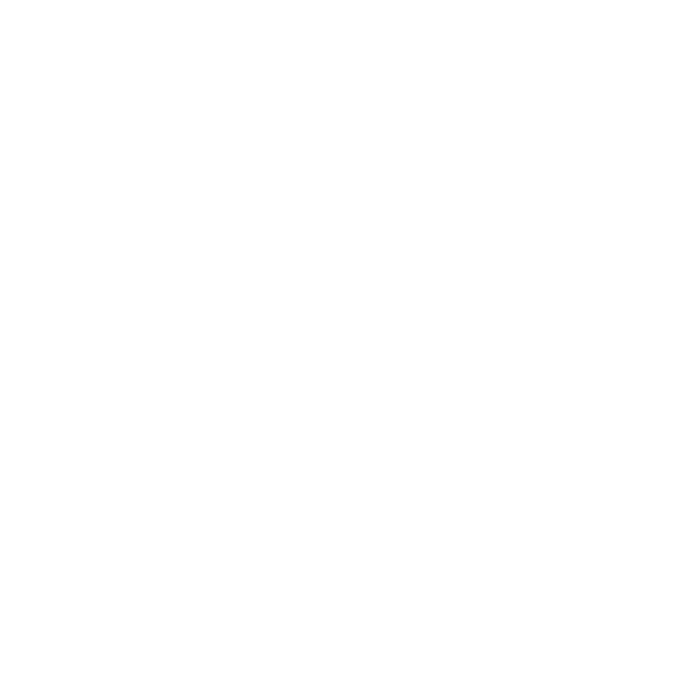}}
    \caption[short]{$A=\{y_1, y_2, y_3\}$}
\end{subfigure}%
\begin{subfigure}{.25\textwidth}
    \centering
    \frame{
    \includegraphics[width=0.25\textwidth]{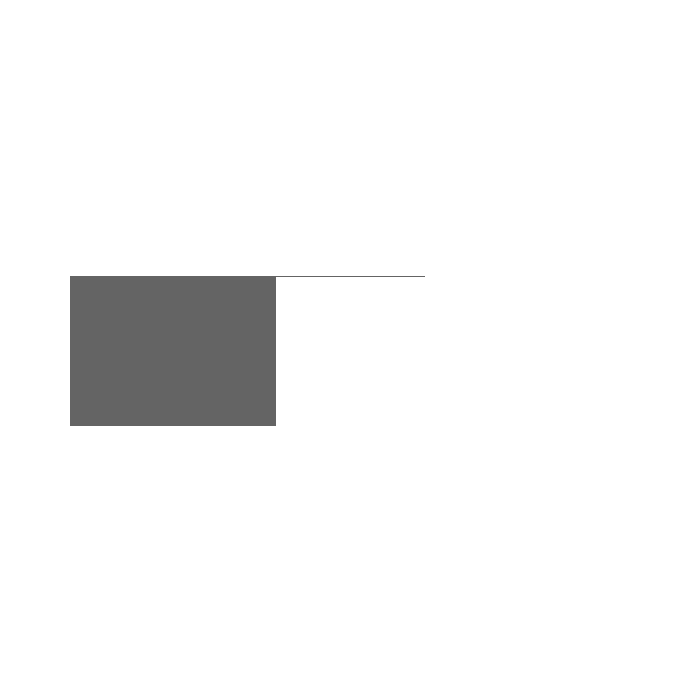}}
    \caption[short]{$A=\{y_1, y_2, y_4\}$}
\end{subfigure}
\raggedright
\begin{subfigure}{.25\textwidth}
    \centering
    \frame{
    \includegraphics[width=0.25\textwidth]{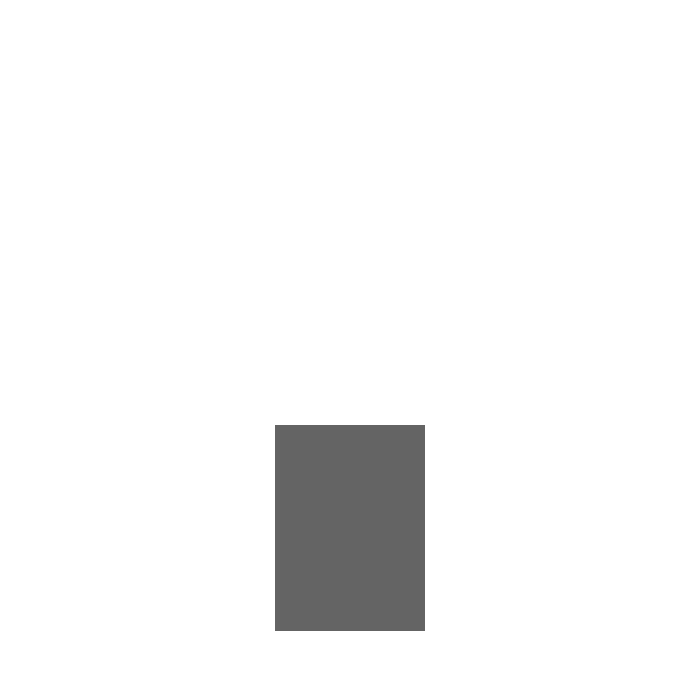}}
    \caption[short]{$A=\{y_1, y_3, y_4\}$}
\end{subfigure}%
\begin{subfigure}{.25\textwidth}
    \centering
    \frame{
    \includegraphics[width=0.25\textwidth]{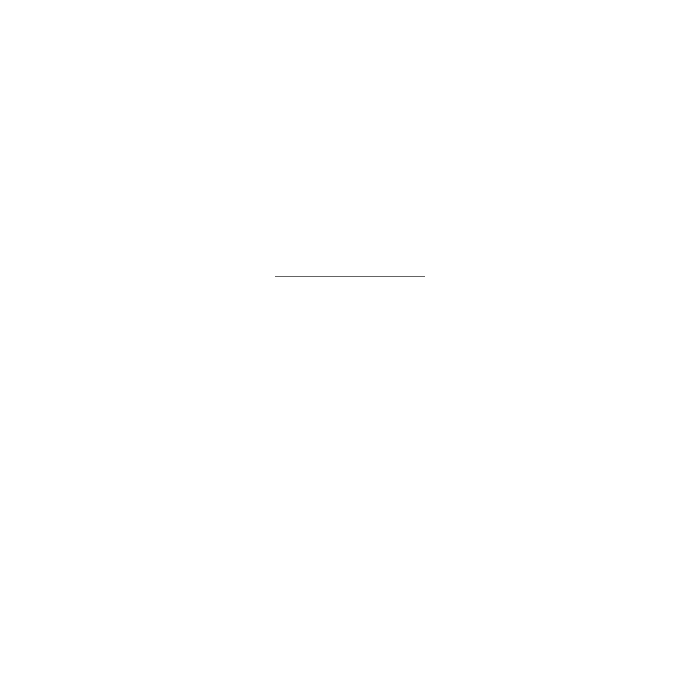}}
    \caption[short]{$A=\{y_2, y_3, y_4\}$}
\end{subfigure}%
\begin{subfigure}{.25\textwidth}
\centering
\frame{
    \includegraphics[width=0.25\textwidth]{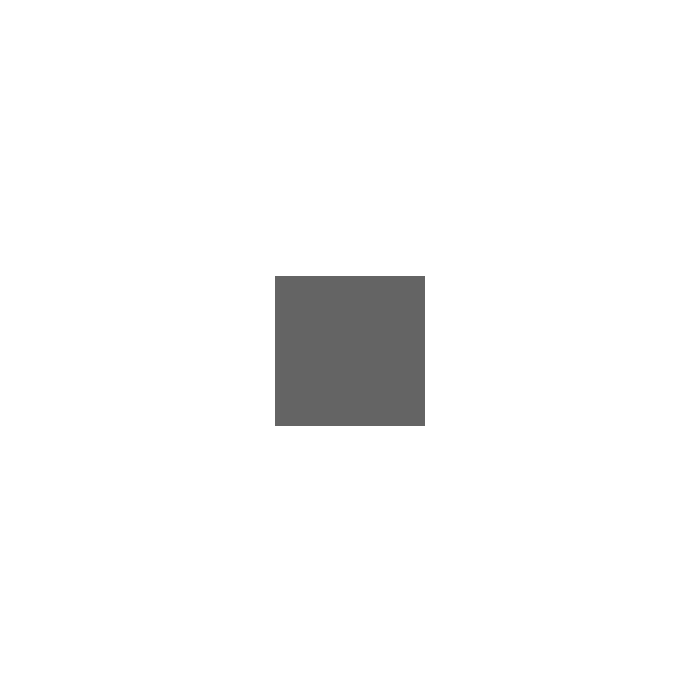}}
    \caption[short]{$A=\{y_1, y_2, y_3, y_4\}$}
\end{subfigure}%
\end{mdframed}
\caption{The cells $X_A$: Example \ref{ex2}}\label{figure2-2}
\end{figure}
\begin{ex}\label{ex3}
 $y_1=(0, 0)$, $y_2=(0, 4)$, $y_3=(4, 0)$, $y_4=(2, 2)$, $y_5=(1, 3)$, $y_6=(3, 3)$, $y_7=(3, 1)$.
 $$\nu=(0.15)\delta_{y_1}+(0.1)\delta_{y_2}+(0.1)\delta_{y_3}+(0.05)\delta_{y_4}+(0.2)\delta_{y_5}+(0.2)\delta_{y_6}+(0.2)\delta_{y_7}$$
\end{ex}

\begin{figure}[H]
\centering
\begin{mdframed}
\begin{minipage}{0.1\linewidth}
\begin{subfigure}{.05\textwidth}

\begin{tikzpicture}
\pgfplotsset{
colormap={revblackwhite}{gray(0cm)=(1); gray(1cm)=(0)}
}
\pgfplotscolorbardrawstandalone[
    colorbar,
    point meta min=0,
    point meta max=1,
    colorbar style={
        width=0.5cm}
        ]
\end{tikzpicture}%

\end{subfigure}%
\end{minipage}
\begin{minipage}{0.9\linewidth}
\begin{subfigure}{.32\textwidth}
    \centering
    \frame{
    \includegraphics[width=0.45\textwidth]{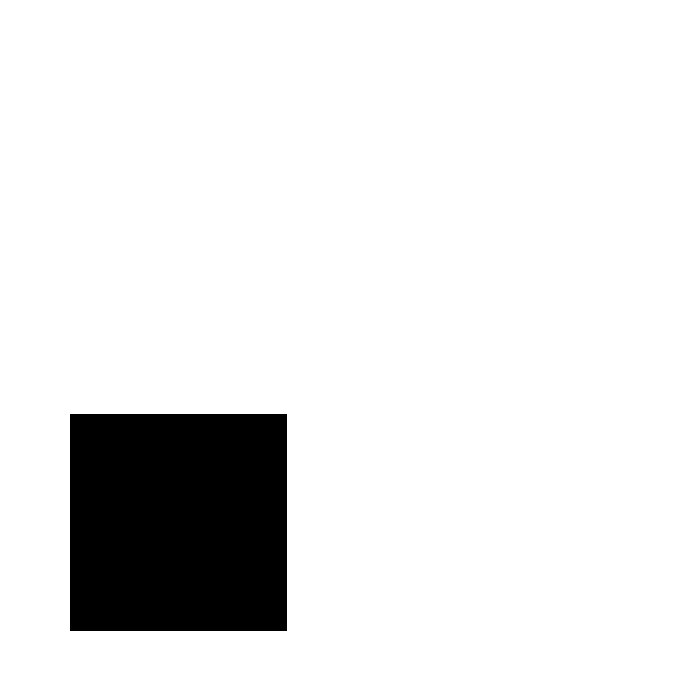}}
    \caption[short]{$\mu_1$}
\end{subfigure}%
\begin{subfigure}{.32\textwidth}
    \centering
    \frame{
    \includegraphics[width=0.45\textwidth]{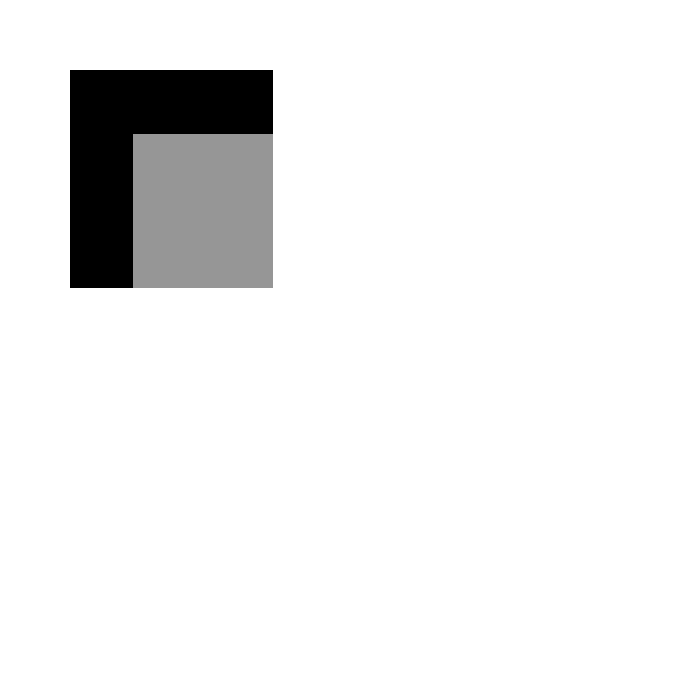}}
    \caption[short]{$\mu_2$}
\end{subfigure}%
\begin{subfigure}{.32\textwidth}
    \centering
    \frame{
    \includegraphics[width=0.45\textwidth]{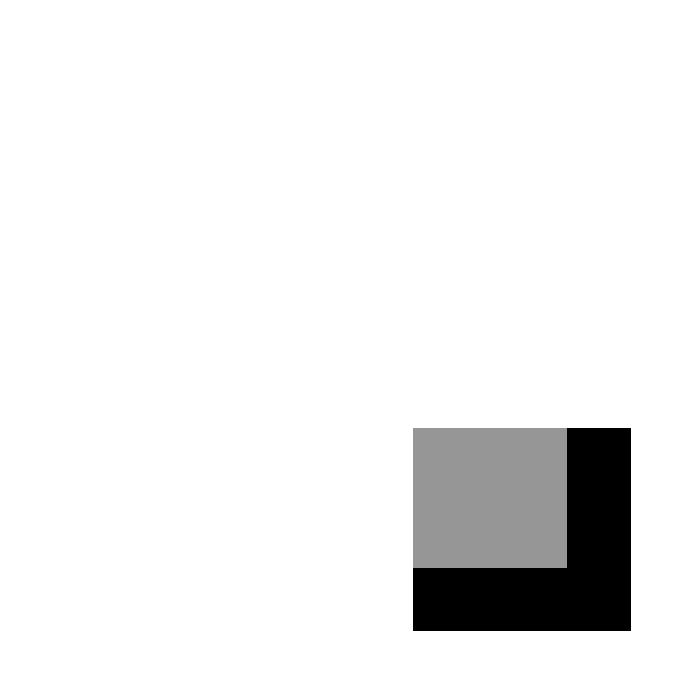}}
    \caption[short]{$\mu_3$}
\end{subfigure}
\begin{subfigure}{.32\textwidth}
    \centering
    \frame{
    \includegraphics[width=0.45\textwidth]{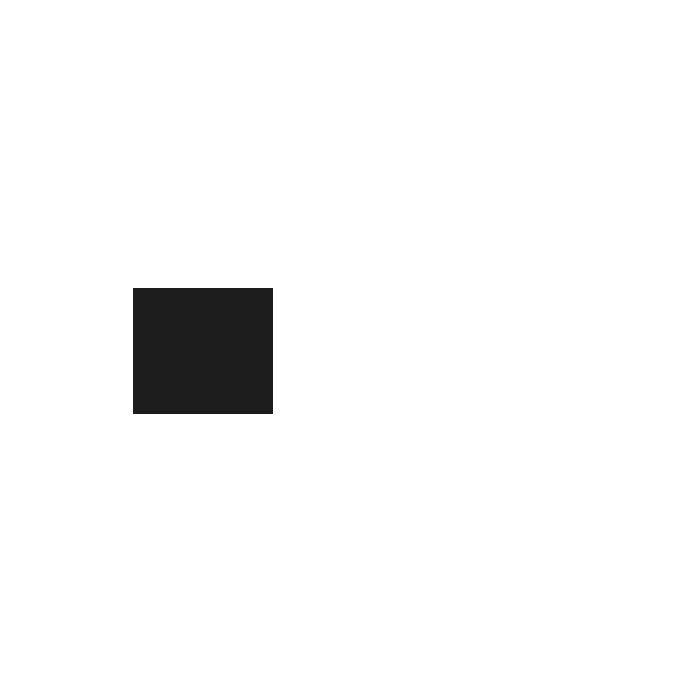}}
    \caption[short]{$\mu_4$}
\end{subfigure}%
\begin{subfigure}{.32\textwidth}
    \centering
    \frame{
    \includegraphics[width=0.45\textwidth]{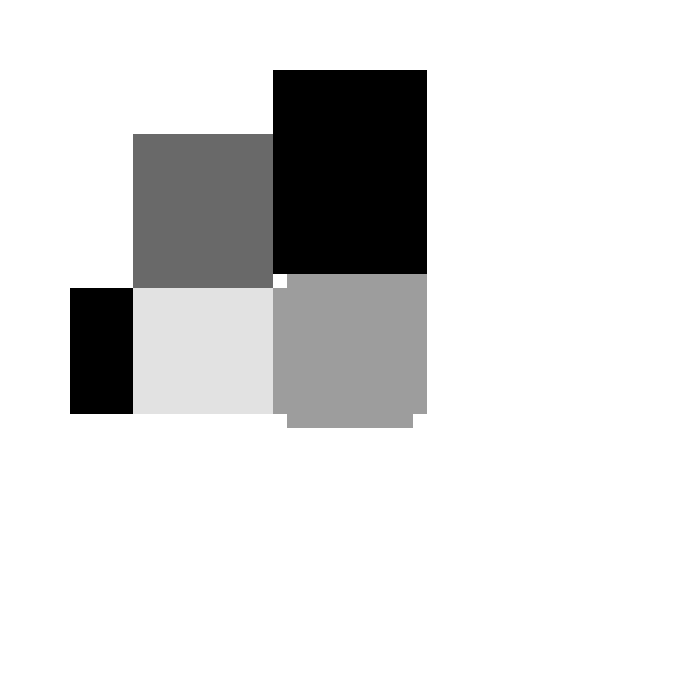}}
    \caption[short]{$\mu_5$}
\end{subfigure}%
\begin{subfigure}{.32\textwidth}
    \centering
    \frame{
    \includegraphics[width=0.45\textwidth]{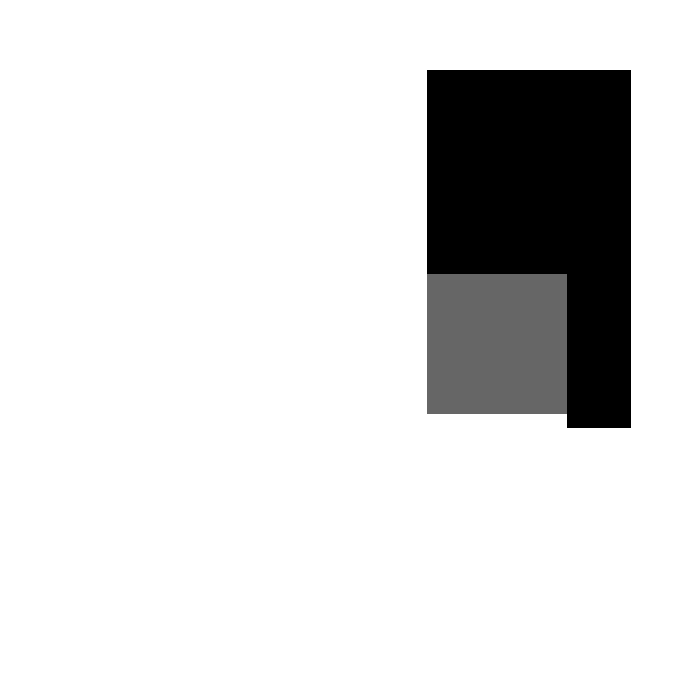}}
    \caption[short]{$\mu_6$}
\end{subfigure}
\begin{subfigure}{.32\textwidth}
    \centering
    \frame{
    \includegraphics[width=0.45\textwidth]{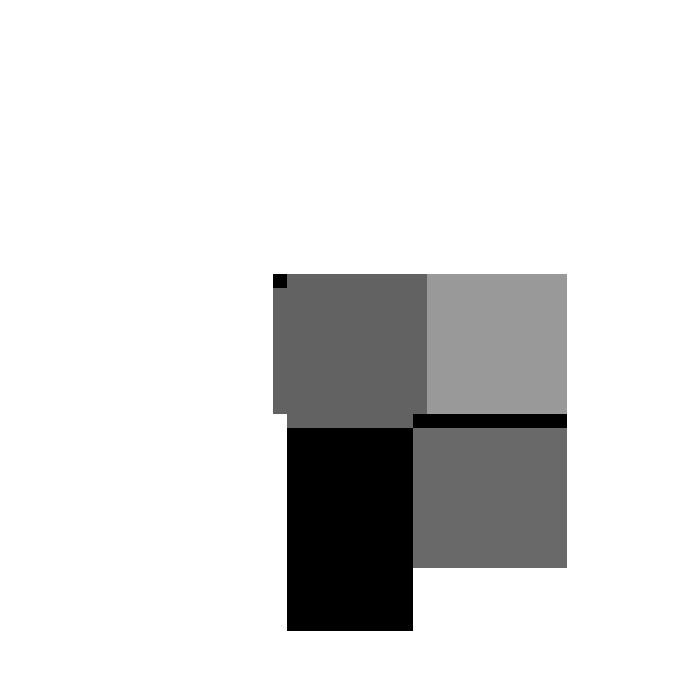}}
    \caption[short]{$\mu_7$}
\end{subfigure}
\end{minipage}
\end{mdframed}
\caption{Transportation of mass: Example \ref{ex3}}\label{figure3-1}
\end{figure}

\begin{figure}[H]
\begin{mdframed}
\centering
\begin{subfigure}{.2\textwidth}
    \centering
    \frame{
    \includegraphics[width=0.35\textwidth]{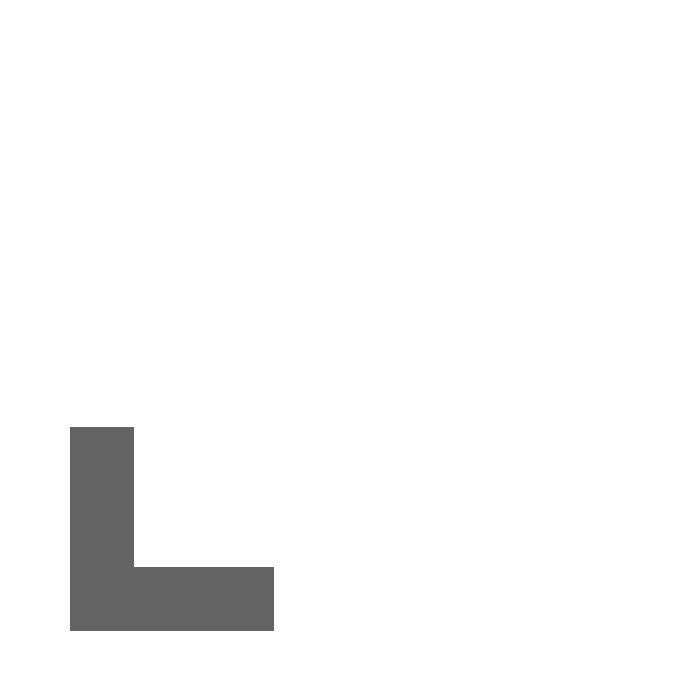}}
    \caption[short]{{\tiny $\{y_1\}$}}
\end{subfigure}%
\begin{subfigure}{.2\textwidth}
    \centering
    \frame{
    \includegraphics[width=0.35\textwidth]{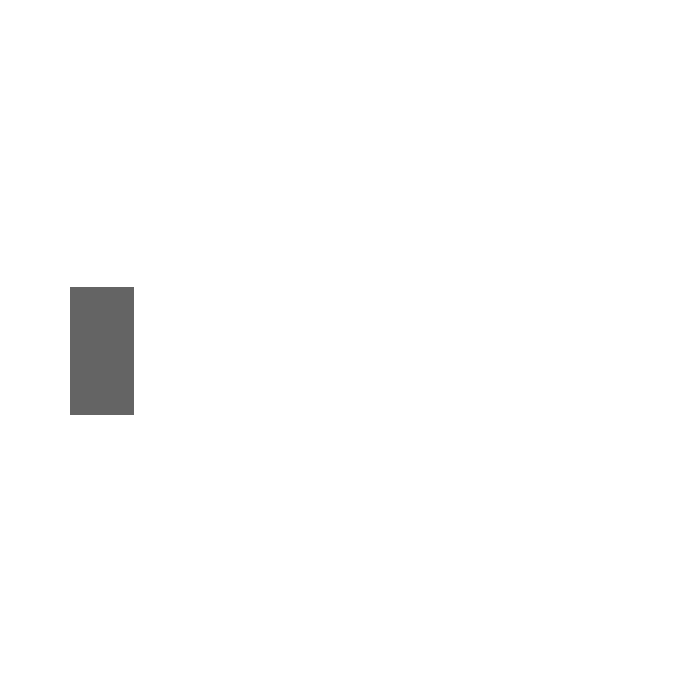}}
    \caption[short]{{\tiny $\{y_5\}$}}
\end{subfigure}%
\begin{subfigure}{.2\textwidth}
    \centering
    \frame{
    \includegraphics[width=0.35\textwidth]{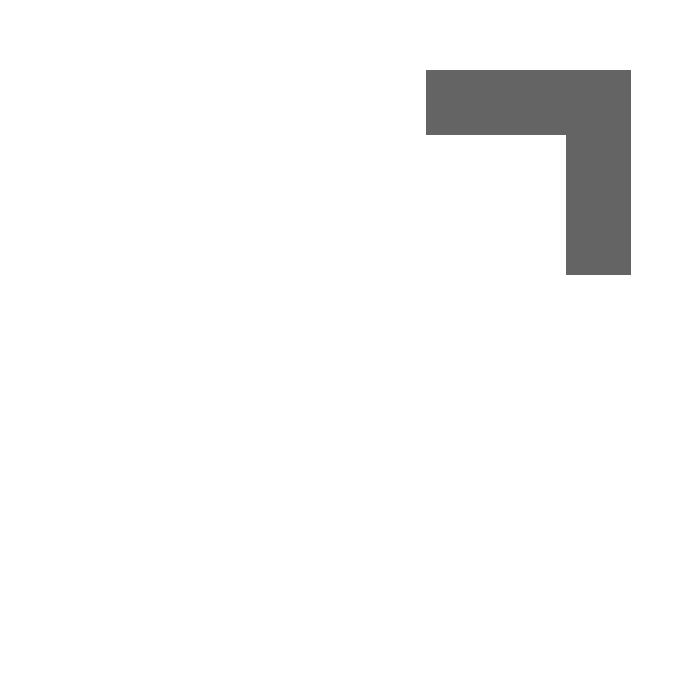}}
    \caption[short]{{\tiny $\{y_6\}$}}
\end{subfigure}%
\begin{subfigure}{.2\textwidth}
    \centering
    \frame{
    \includegraphics[width=0.35\textwidth]{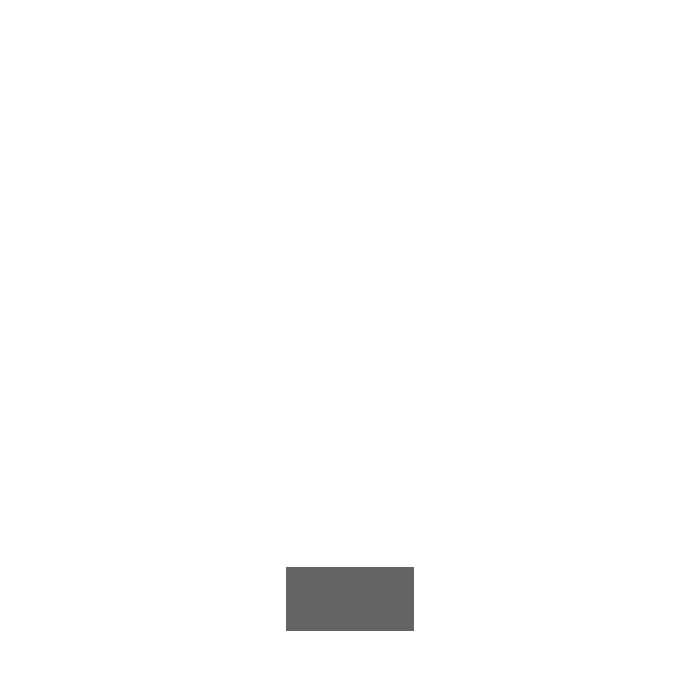}}
    \caption[short]{{\tiny $\{y_7\}$}}
\end{subfigure}%
\begin{subfigure}{.2\textwidth}
    \centering
    \frame{
    \includegraphics[width=0.35\textwidth]{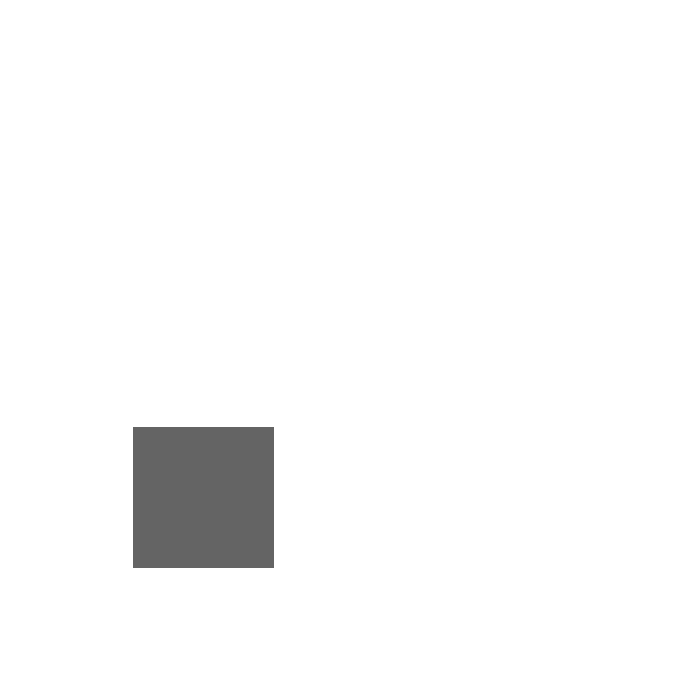}}
    \caption[short]{{\tiny $\{y_1, y_4\}$}}
\end{subfigure}
\begin{subfigure}{.2\textwidth}
    \centering
    \frame{
    \includegraphics[width=0.35\textwidth]{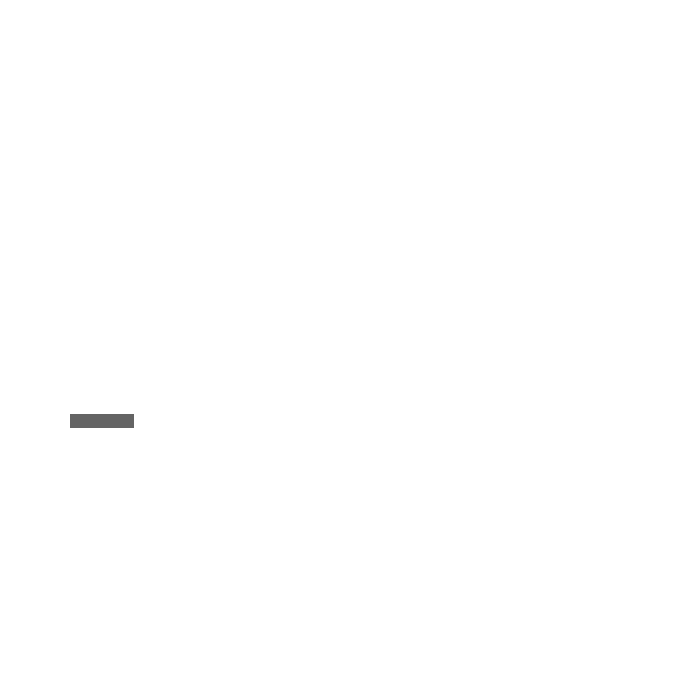}}
    \caption[short]{{\tiny $\{y_1, y_5\}$}}
\end{subfigure}%
\begin{subfigure}{.2\textwidth}
    \centering
    \frame{
    \includegraphics[width=0.35\textwidth]{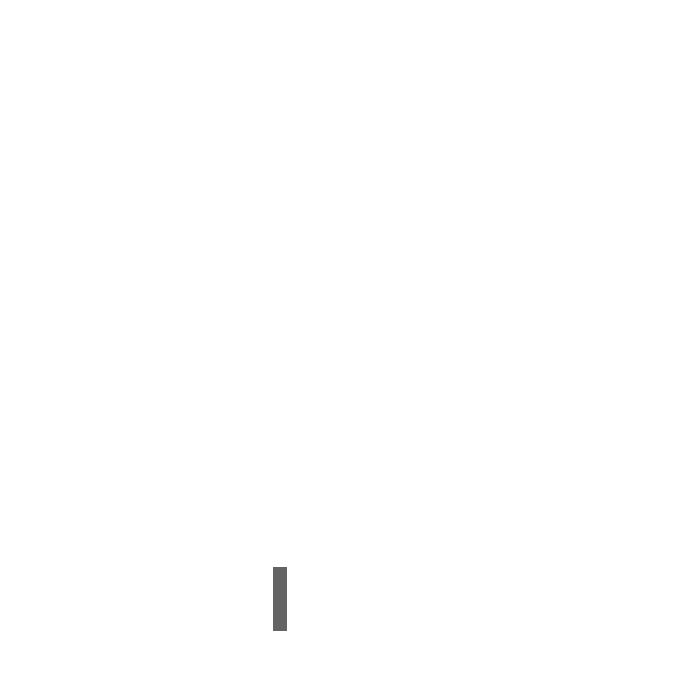}}
    \caption[short]{{\tiny $\{y_1, y_7\}$}}
\end{subfigure}%
\begin{subfigure}{.2\textwidth}
    \centering
    \frame{
    \includegraphics[width=0.35\textwidth]{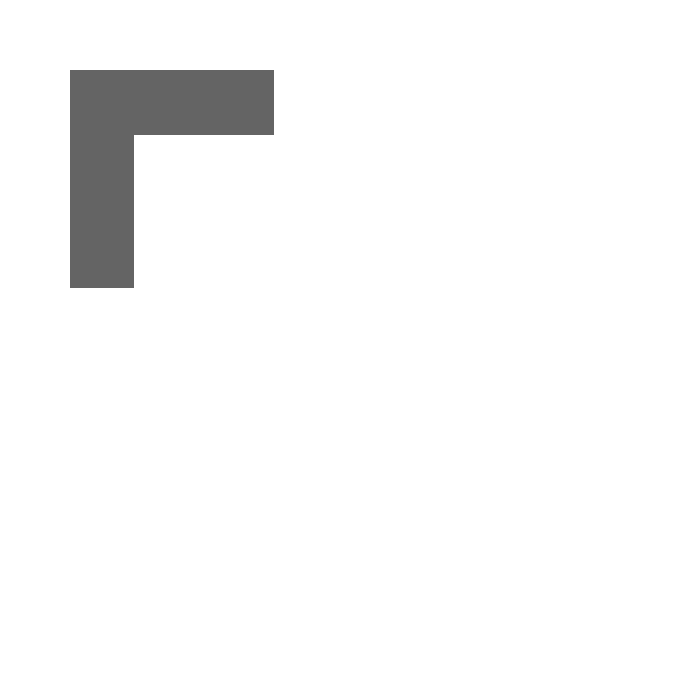}}
    \caption[short]{{\tiny $\{y_2, y_5\}$}}
\end{subfigure}%
\begin{subfigure}{.2\textwidth}
    \centering
    \frame{
    \includegraphics[width=0.35\textwidth]{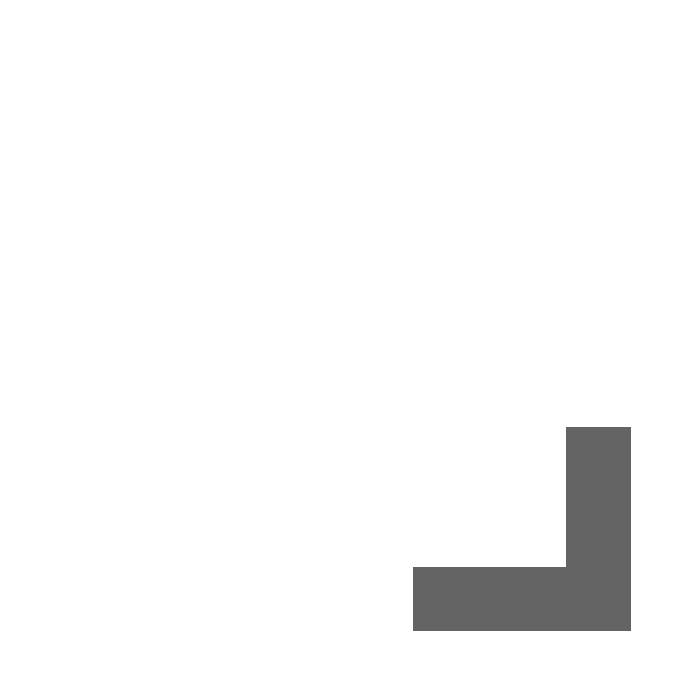}}
    \caption[short]{{\tiny $\{y_3, y_7\}$}}
\end{subfigure}%
\begin{subfigure}{.2\textwidth}
    \centering
    \frame{
    \includegraphics[width=0.35\textwidth]{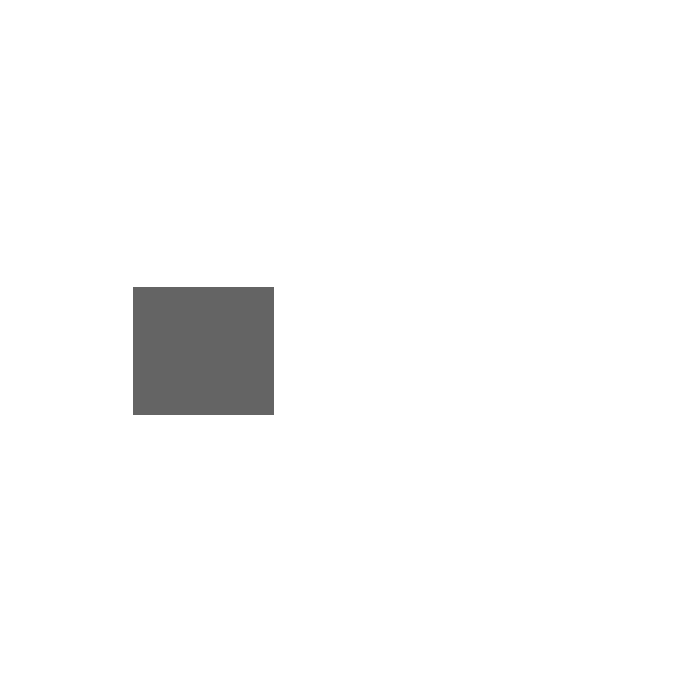}}
    \caption[short]{{\tiny $\{y_4, y_5\}$}}
\end{subfigure}
\begin{subfigure}{.2\textwidth}
    \centering
    \frame{
    \includegraphics[width=0.35\textwidth]{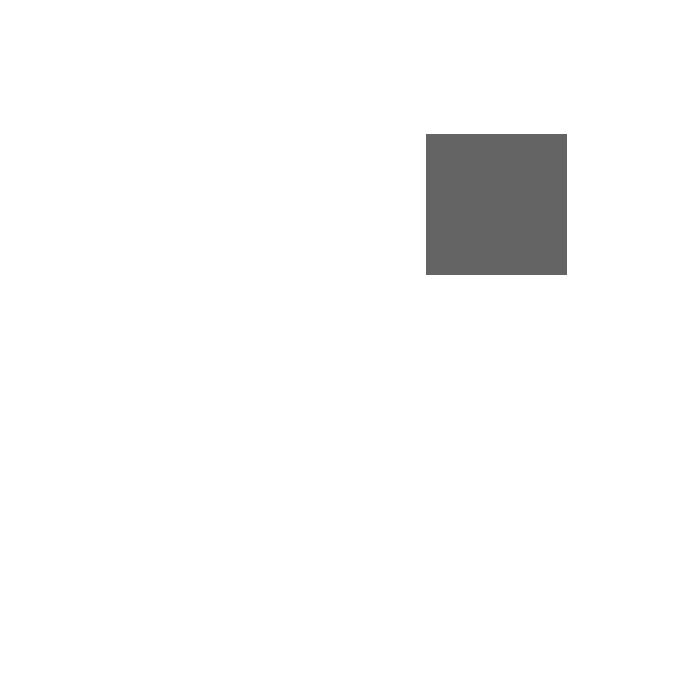}}
    \caption[short]{{\tiny $\{y_4, y_6\}$}}
\end{subfigure}%
\begin{subfigure}{.2\textwidth}
    \centering
    \frame{
    \includegraphics[width=0.35\textwidth]{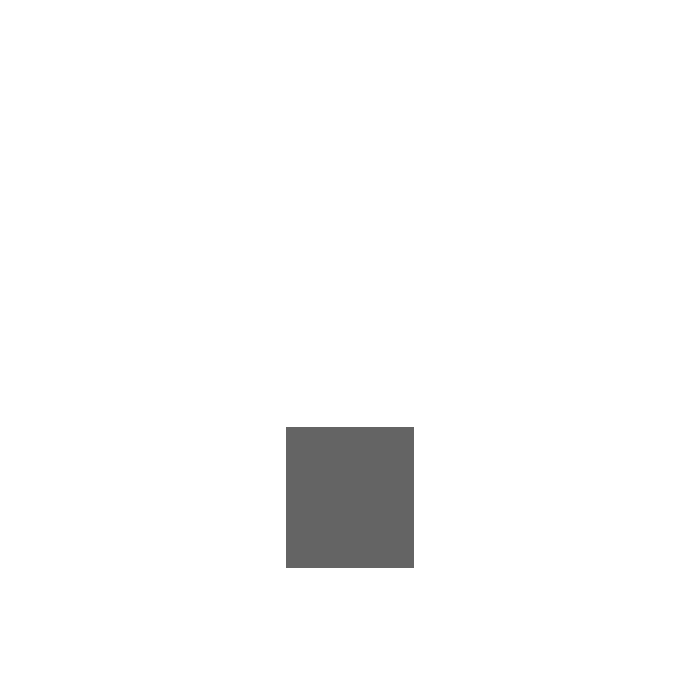}}
    \caption[short]{{\tiny $\{y_4, y_7\}$}}
\end{subfigure}%
\begin{subfigure}{.2\textwidth}
    \centering
    \frame{
    \includegraphics[width=0.35\textwidth]{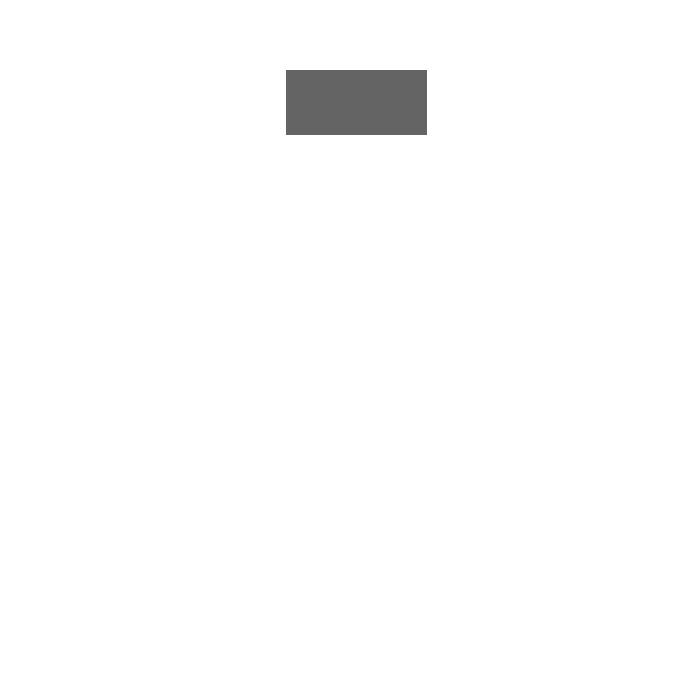}}
    \caption[short]{{\tiny $\{y_5, y_6\}$}}
\end{subfigure}%
\begin{subfigure}{.2\textwidth}
    \centering
    \frame{
    \includegraphics[width=0.35\textwidth]{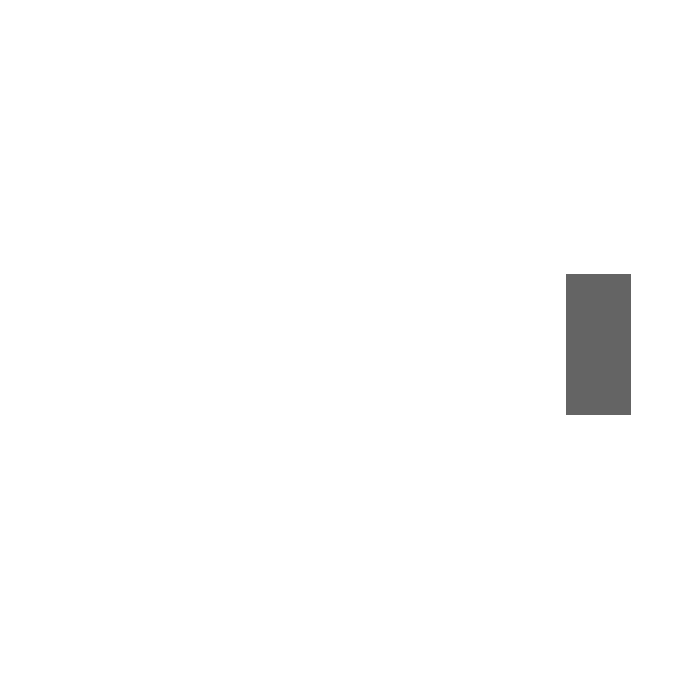}}
    \caption[short]{{\tiny $\{y_6, y_7\}$}}
\end{subfigure}%
\begin{subfigure}{.2\textwidth}
\centering
\frame{
    \includegraphics[width=0.35\textwidth]{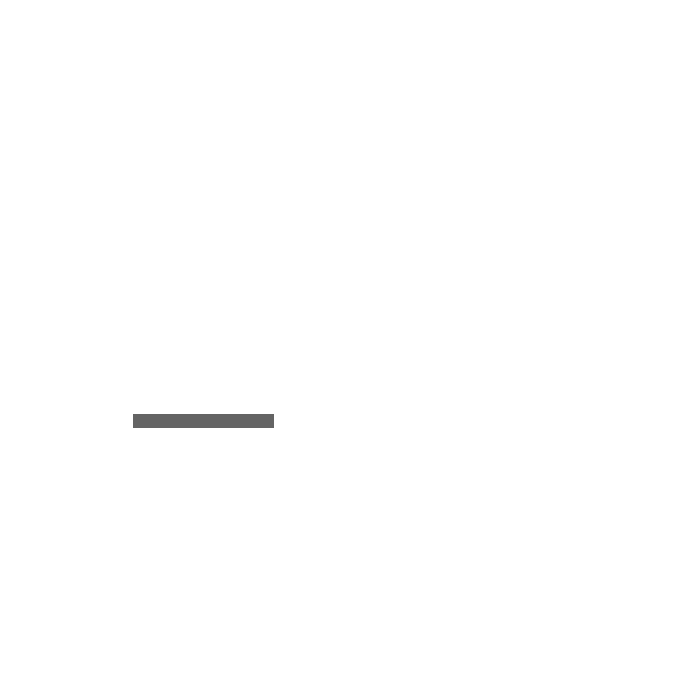}}
    \caption[short]{{\tiny $\{y_1, y_4, y_5\}$}}
\end{subfigure}
\begin{subfigure}{.2\textwidth}
\centering
\frame{
    \includegraphics[width=0.35\textwidth]{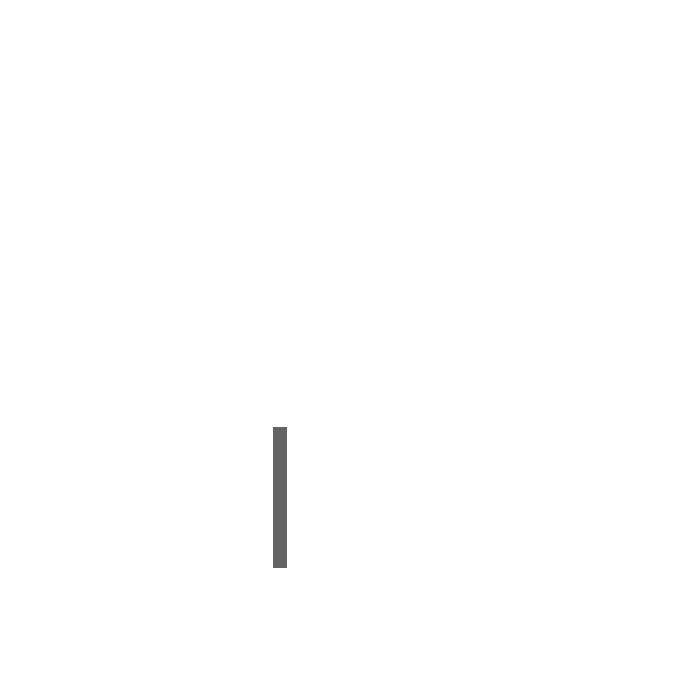}}
    \caption[short]{{\tiny $\{y_1, y_4, y_7\}$}}
\end{subfigure}%
\begin{subfigure}{.2\textwidth}
\centering
\frame{
    \includegraphics[width=0.35\textwidth]{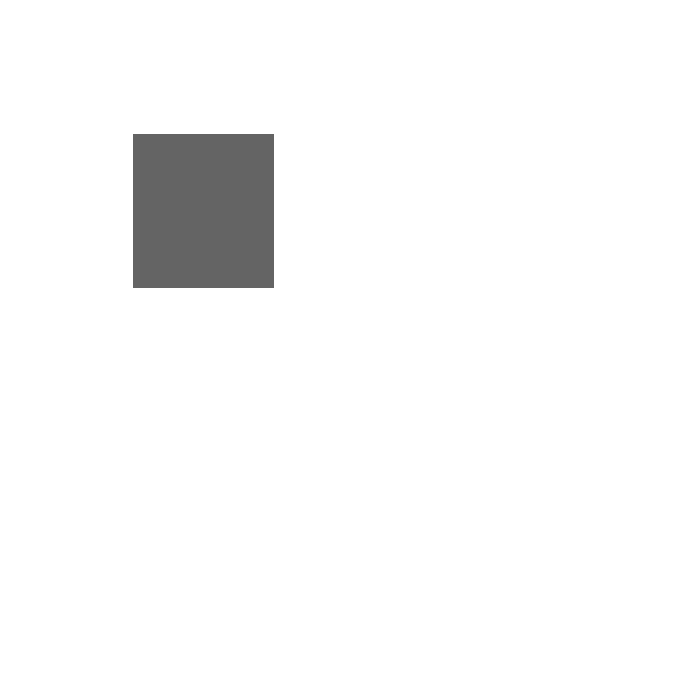}}
    \caption[short]{{\tiny $\{y_2, y_4, y_5\}$}}
\end{subfigure}%
\begin{subfigure}{.2\textwidth}
\centering
\frame{
    \includegraphics[width=0.35\textwidth]{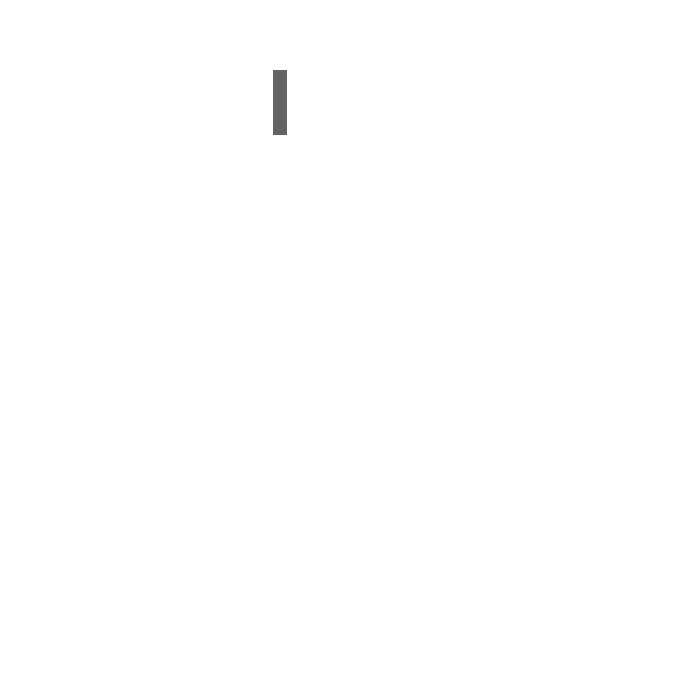}}
    \caption[short]{{\tiny $\{y_2, y_5, y_6\}$}}
\end{subfigure}%
\begin{subfigure}{.2\textwidth}
\centering
\frame{
    \includegraphics[width=0.35\textwidth]{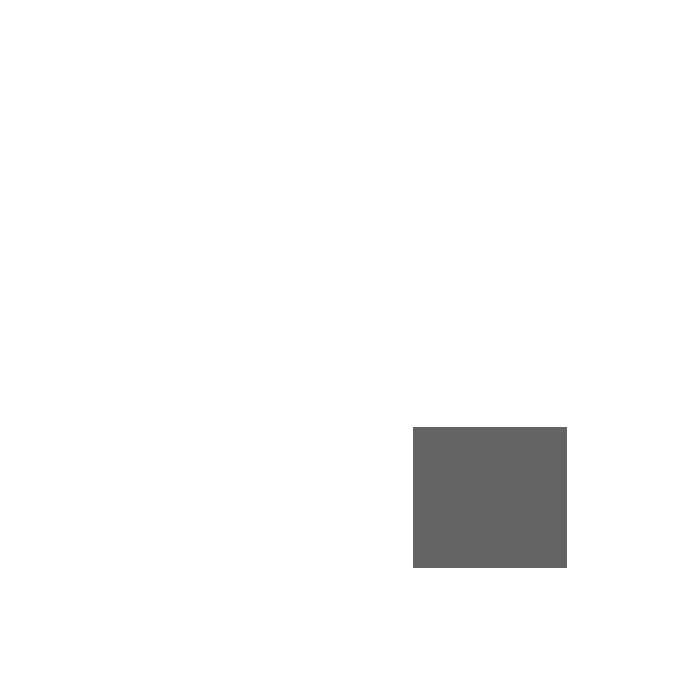}}
    \caption[short]{{\tiny $\{y_3, y_4, y_7\}$}}
\end{subfigure}%
\begin{subfigure}{.2\textwidth}
\centering
\frame{
    \includegraphics[width=0.35\textwidth]{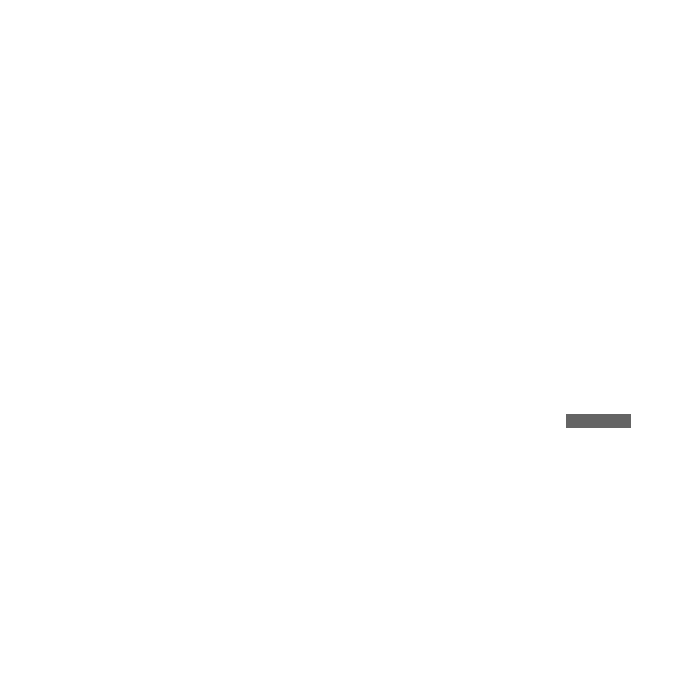}}
    \caption[short]{{\tiny $\{y_3, y_6, y_7\}$}}
\end{subfigure}
\begin{subfigure}{.2\textwidth}
\centering
\frame{
    \includegraphics[width=0.35\textwidth]{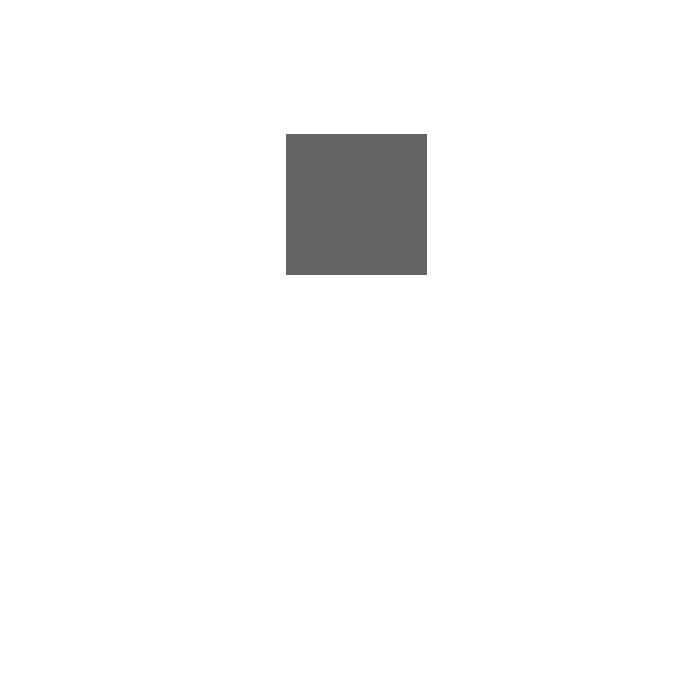}}
    \caption[short]{{\tiny $\{y_4, y_5, y_6\}$}}
\end{subfigure}%
\begin{subfigure}{.2\textwidth}
\centering
\frame{
    \includegraphics[width=0.35\textwidth]{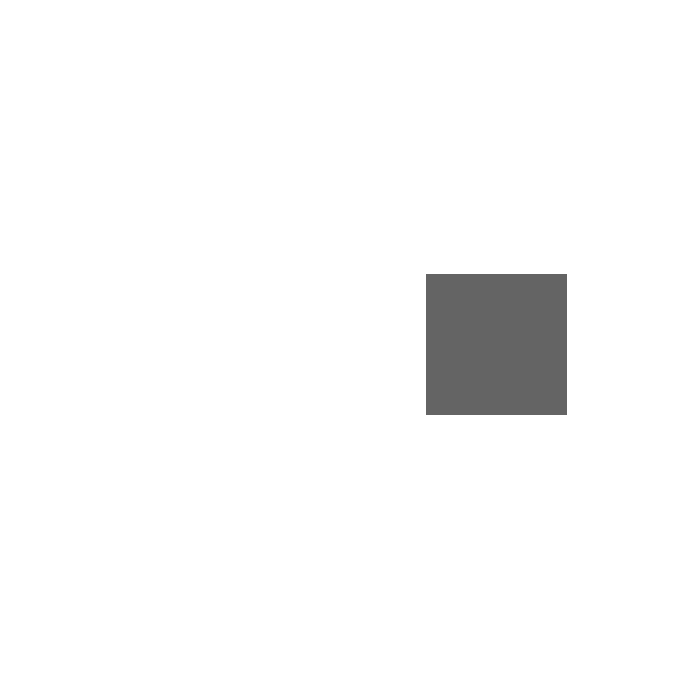}}
    \caption[short]{{\tiny $\{y_4, y_6, y_7\}$}}
\end{subfigure}%
\begin{subfigure}{.2\textwidth}
\centering
\frame{
    \includegraphics[width=0.35\textwidth]{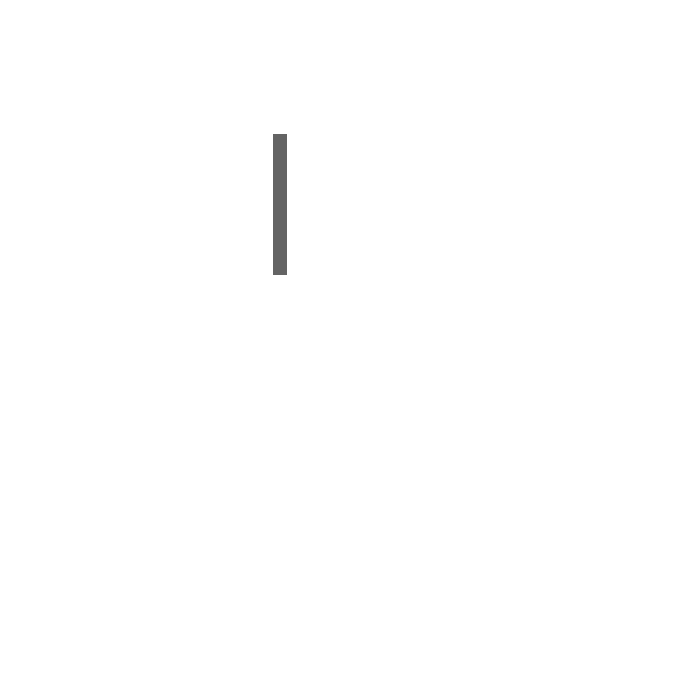}}
    \caption[short]{{\tiny $\{y_2, y_4, y_5, y_6\}$}}
\end{subfigure}%
\begin{subfigure}{.2\textwidth}
\centering
\frame{
    \includegraphics[width=0.35\textwidth]{Set_3Cell_Diagram2456.png}}
    \caption[short]{{\tiny $\{y_4, y_5, y_6, y_7\}$}}
\end{subfigure}%
\begin{subfigure}{.2\textwidth}
\centering
\frame{
    \includegraphics[width=0.35\textwidth]{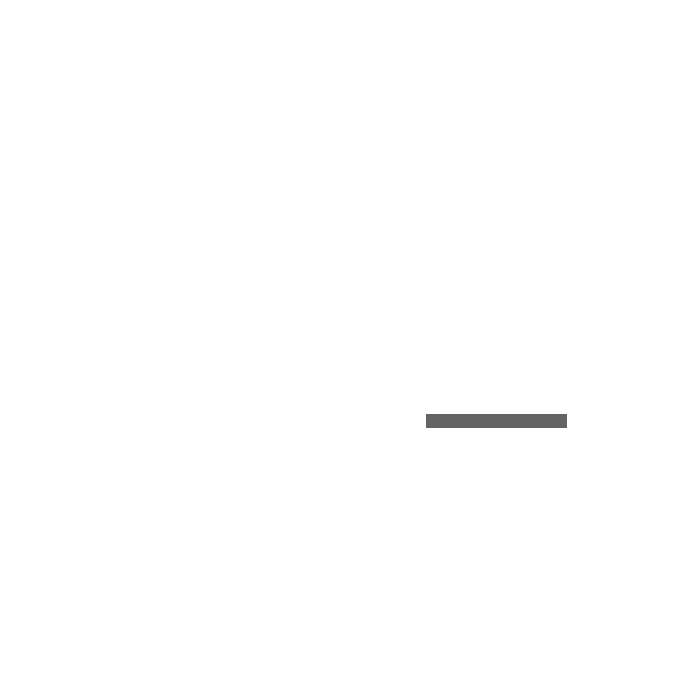}}
    \caption[short]{{\tiny $\{y_3, y_4, y_6, y_7\}$}}
\end{subfigure}
\raggedright\begin{subfigure}{.2\textwidth}
\centering
\frame{
    \includegraphics[width=0.35\textwidth]{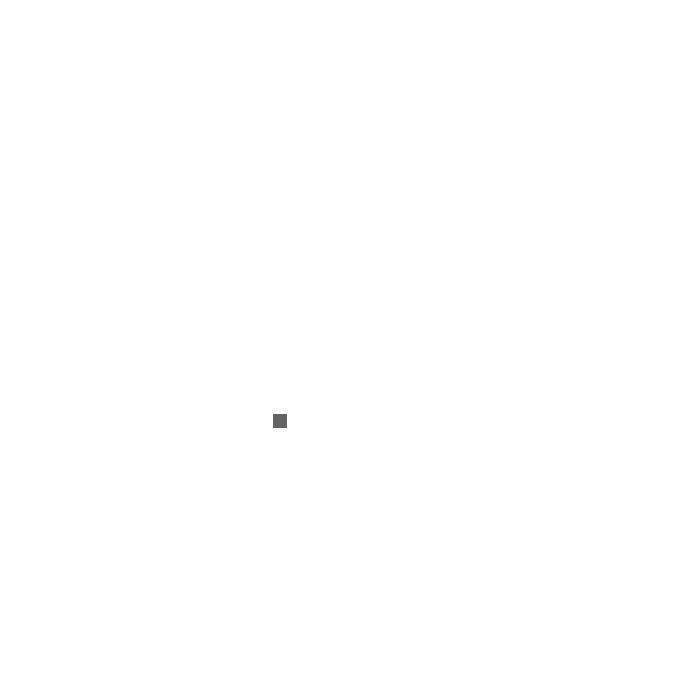}}
    \caption[short]{{\tiny $\{y_1, y_4, y_5, y_6, y_7\}$}}
\end{subfigure}%
\begin{subfigure}{.2\textwidth}
\centering
\frame{
    \includegraphics[width=0.35\textwidth]{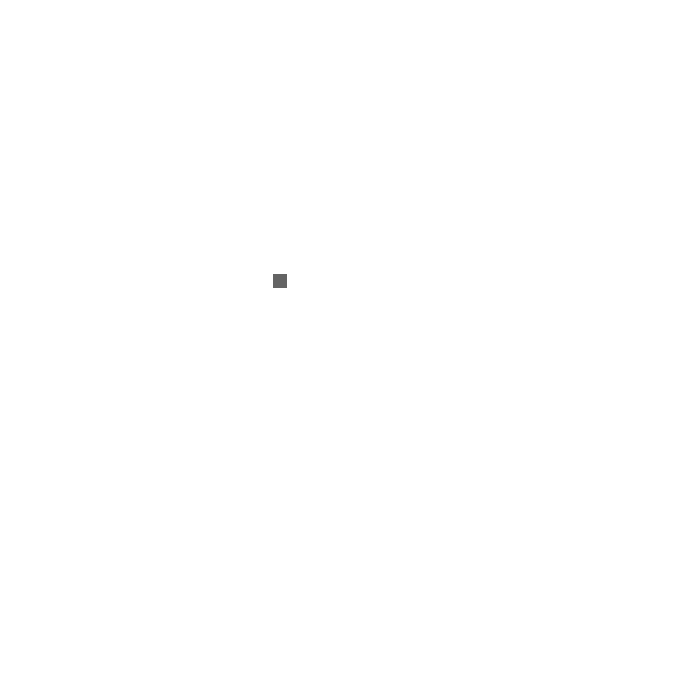}}
    \caption[short]{{\tiny $\{y_2, y_4, y_5, y_6, y_7\}$}}
\end{subfigure}%
\begin{subfigure}{.2\textwidth}
\centering
\frame{
    \includegraphics[width=0.35\textwidth]{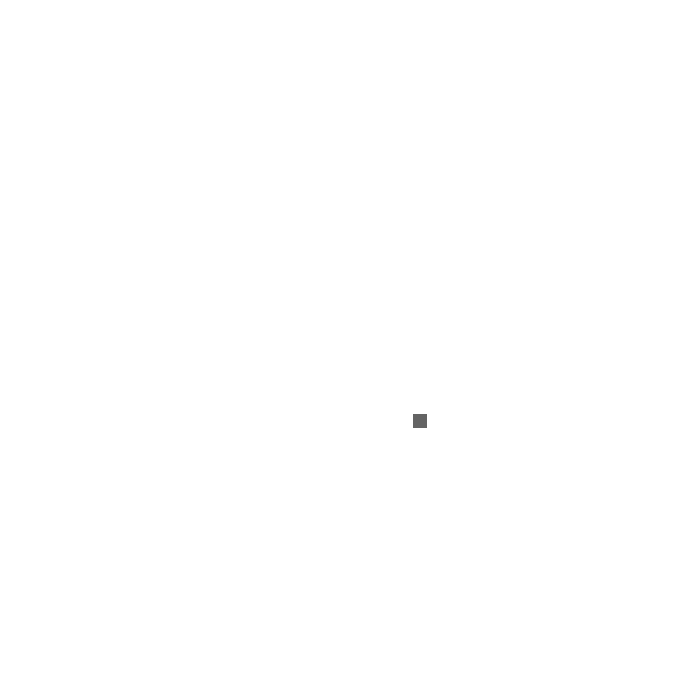}}
    \caption[short]{{\tiny $\{y_3, y_4, y_5, y_6, y_7\}$}}
\end{subfigure}
\end{mdframed}
\caption{The cells $X_A$: Example \ref{ex3}}\label{figure3-2}
\end{figure}

\bibliographystyle{alpha}
\bibliography{snowshovelingalg}

\end{document}